\documentclass[11pt]{amsart}
\usepackage{afterpage}
\usepackage{graphicx, epstopdf, color, verbatim} 
\usepackage{amsmath,amsfonts,amsthm,stmaryrd,amssymb} 
\usepackage{enumitem}
\usepackage[pdftex,pagebackref,letterpaper=true,colorlinks=true,pdfpagemode=none,urlcolor=blue,linkcolor=blue,citecolor=blue,pdfstartview=FitH]{hyperref}

\usepackage{tikz,pgf}
\usetikzlibrary{positioning, arrows,topaths}

\newtheorem{theorem}{Theorem}
\newtheorem{lemma}[theorem]{Lemma}
\newtheorem{corollary}[theorem]{Corollary}
\newtheorem{proposition}[theorem]{Proposition}
\newtheorem{conjecture}[theorem]{Conjecture}
  \theoremstyle{definition}
\newtheorem{definition}[theorem]{Definition}
\newtheorem{example}{Example}
\newtheorem{remark}[theorem]{Remark}

\def\E{\mathcal{E}}
\def\Z{\mathbb{Z}}
\def\R{\mathbb{R}}

\def\GS{\mathcal{GSV}}
\def\MBV{\mathcal{MBV}}
\DeclareMathOperator{\poly}{poly}

\DeclareMathOperator{\infconv}{\square_\mathbb{Z}}

\DeclareMathOperator{\conv}{conv}

\begin{document}
\title[Finite Matroid-Based Valuation Conjecture is False]{\bf The Finite Matroid-Based Valuation Conjecture is False}
\author{Ngoc Mai Tran}
\address{Department of Mathematics, University of Texas at Austin, TX 78712}
\email{ntran@math.utexas.edu}
\thanks{The author would like to thank Rakesh Vohra for introducing the problem, and Renato Paes Leme and Kazuo Murota for helpful feedback on an earlier version of the manuscript. The author is very grateful for two anonymous referees for their careful reading and helpful comments, including a much shorter proof of Lemma \ref{lem:in.gs}.}
\date{\today}

\maketitle


\begin{abstract}
The matroid-based valuation conjecture of Ostrovsky and Paes Leme \cite{ostrovsky2015gross} states that all gross substitutes valuations on $n$ items can be produced from merging and endowments of weighted ranks of matroids defined on at most $m(n)$ items. We show that if $m(n) = n$, then this statement holds for $n \leq 3$ and fails for all $n \geq 4$. In particular, the set of gross substitutes valuations on $n \geq 4$ items is strictly larger than the set of matroid based valuations defined on the ground set $[n]$. Our proof uses matroid theory and discrete convex analysis to explicitly construct a large family of counter-examples. 
It indicates that merging and endowment by themselves are poor operations to generate gross substitutes valuations. 
We also connect the general MBV conjecture and related questions to long-standing open problems in matroid theory, and conclude with open questions at the intersection of this field and economics. 
\end{abstract}
\section{Introduction}
Gross substitutes valuations form an algorithmically tractable subclass of submodular functions on $2^{[n]}$ to $\R$. They are of special interest to combinatorial auctions \cite{kelso1982job,gul1999walrasian,danilov2001discrete,ausubel2002ascending,reijnierse2002verifying,bing2004presentation,hatfield2005matching,lehmann2006combinatorial}, have numerous applications and have been discovered and rediscovered in various contexts: matroid theory and optimization \cite{edmonds1970submodular,dress1990valuated}, algebraic geometry \cite{gelfand1987combinatorial,kapranov1993chow,keel2006geometry,speyer2008tropical,ben2017walrasian}, and discrete convex analysis \cite{murota1999m,murota2003discrete}, see \cite{leme2017gross} for a comprehensive recent survey. Kelso and Crawford \cite{kelso1982job} put forward the notion of gross substitutes as a way to generalize the theory of pricing and ascending auctions that had been developed earlier for matching markets. When agents valuations are gross substitutes, competitive equilibrium is guaranteed to exist \cite{bikhchandani1997competitive} and the competitive prices can be found by a greedy algorithm \cite{gul1999walrasian}. From the agents' viewpoint, however, specifying an arbitrary gross substitutes function on $[n]$  requires at least $2^{n/\poly(n)}$ values \cite{hajek2008substitute, leme2017gross}. This presents a major practical difficulty in implementing combinatorial auctions with gross substitutes.

A number of papers have been devoted to finding constructive characterizations for gross substitutes valuations \cite{hatfield2005matching, hajek2008substitute,kojima2014designing,ostrovsky2015gross,leme2017gross, milgrom2017discovering, balkanski2018construction}.
The general idea is to start with a class of known gross substitutes valuations, and close it up under operations that preserve gross substitutability. Two natural operations with simple economics interpretations are merging and endowments. With these operations, Hatfield and Milgrom \cite{hatfield2005matching} proposed to start with unit demand valuations and called the resulting class endowed assignment valuations (EAVs). They showed that this family encompasses a large number of gross substitutes valuations frequently used in economics. Ostrovsky and Paes Leme proved that not all gross substitutes valuations are EAVs \cite{ostrovsky2015gross}. They proposed to start with a richer class: weighted ranks of all matroids on a finite set $[m]$. The resulting class, matroid-based valuations (MBVs), is conjectured to be equal to the set of gross substitutes. 

To make this conjecture precise, one needs to clarify the relation between $m$, the number of items that the matroids in the generating set are defined on, and $n$, the number of items in the target class of gross substitutes valuations. Let $\GS_n$ be the set of all gross substitutes valuations whose ground set is some subset of $[n]$
$$ \GS_n = \{u: 2^{S} \to \R_{\geq 0}, S \subseteq [n], u \mbox{ is a gross substitute valuation}\}. $$ 
Let $\mathcal{MBV}_{m,n}$ denote the smallest subset of $\GS_n$ that (a) is closed under merging and endowment, and (b) contains weighted ranks of all matroids whose ground set is a subset of $[m]$. The MBV conjecture reads as follows.
\footnote{We note that a literal translation of the statement of Otrovsky and Leme allows $m$ to depends on $v$ instead of $n$, that is, $v \in \MBV_{m(v),n}$. The only case where this distinction matter is when $\sup_{v \in \GS_n}m(v) = \infty$, that is, $m(n) = \infty$. In other words, to represent all gross substitutes valuations on $n$ items, one would need to start with \emph{all} weighted ranks of \emph{all} possible matroids. Even if such a result was true, it would not be a helpful characterization of gross substitutes. Therefore, we exclude this case in the formulation of Conjecture~\ref{conj:mbv}.} 

\begin{conjecture}[The Matroid-based Valuation Conjecture \cite{ostrovsky2015gross}]\label{conj:mbv}
For each $n \geq 1$, 
for all $v \in \GS_n$, there exists some integer $m(n)$ such that $v \in \mathcal{MBV}_{m(n),n}$. In other words, $\GS_n \subseteq \MBV_{m(n),n}$.
\end{conjecture}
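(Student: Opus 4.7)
The plan is to attempt to prove Conjecture~\ref{conj:mbv} by exploiting the dictionary between gross substitutes valuations and discrete convex analysis. By Murota's theorem, $\GS_n$ coincides with the class of M$^\natural$-concave functions on $2^{[n]}$, whose convex conjugates have integer generalized permutahedra as their base polytopes. Weighted matroid ranks correspond to matroid base polytopes equipped with linear cost vectors; merging corresponds to tropical convolution, i.e.\ Minkowski sum at the level of base polytopes; and endowment corresponds to taking a face of the base polytope and projecting out the fixed coordinates. This dictionary converts Conjecture~\ref{conj:mbv} into a clean geometric question.

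First, I would reduce the conjecture to the following statement: for every $n$ there exists $m = m(n)$ such that every integer generalized permutahedron in $\R^n$ arises as a coordinate projection of some face of a Minkowski sum of matroid base polytopes in $\R^m$. If such a decomposition exists with a number of summands depending only on $n$, then the valuation-level construction follows mechanically by reversing the dictionary, realizing the target $v \in \GS_n$ as an endowment of a merger of weighted matroid ranks on $[m(n)]$. The crucial freedom the conjecture grants, compared to the naive $m(n) = n$ version, is that one may introduce as many auxiliary items in $[m] \setminus [n]$ as needed to house additional matroid summands, and then eliminate them at the end via iterated endowments.

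Second, I would attempt to build this decomposition by induction on the dimension of the deformation cone of the target generalized permutahedron, with matroid polytopes themselves serving as the base case. At each step one would peel off a matroid summand whose Minkowski complement has strictly smaller complexity, introducing fresh auxiliary coordinates to accommodate it, and then push the computation back to $[n]$ at the end. An auxiliary tool would be the Cayley trick, which expresses Minkowski sums as slices of higher-dimensional polytopes, together with standard results on M-convex intersection that certify the resulting merged valuation remains gross substitutes before the final projection.

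The main obstacle, which I expect to be genuinely fundamental, is that the standard decompositions of generalized permutahedra into matroid polytopes — for example via the dilated-simplex basis of Aguiar--Ardila, or via nestohedra built from building sets — use \emph{signed} coefficients. Minkowski sum is a purely additive operation, so merging cannot produce a subtractive summand; taking a face and projecting, as endowment does, also cannot introduce negativity. To make the induction go through one would need a purely positive matroid-Minkowski decomposition of every integer generalized permutahedron (possibly after enlarging $m$ and projecting). I expect this is precisely where the attempted proof breaks: the space of polytopes reachable from matroid polytopes by positive Minkowski sums, faces, and coordinate projections is likely to be a proper subclass of all integer generalized permutahedra, regardless of how large $m(n)$ is allowed to be. This is presumably the obstruction the paper leverages to falsify the conjecture by exhibiting explicit M$^\natural$-concave valuations that admit no such positive decomposition.
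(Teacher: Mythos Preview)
The statement you are addressing is a \emph{conjecture}, and the paper neither proves nor disproves it in its general form; the paper only settles the special case $m(n)=n$ (Theorem~\ref{thm:main}), showing it holds for $n\le 3$ and fails for $n\ge 4$, and explicitly leaves the general case open. Your closing sentence --- that the paper ``leverages [this obstruction] to falsify the conjecture'' --- therefore misreads the paper. In fact the paper shows the opposite for its main counterexamples: the partition valuations that witness $\MBV_{n,n}\subsetneq\GS_n$ \emph{do} lie in $\MBV_{n+r,n}$ for suitable $r$ (proof of Lemma~\ref{lem:in.gs}), so they are fully consistent with Conjecture~\ref{conj:mbv}. The paper speculates the general conjecture is unlikely to hold but proves nothing in that direction, so there is no proof in the paper against which to compare your attempt.

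Your geometric dictionary is in the right spirit --- the paper itself exploits it in Lemma~\ref{lem:gs.mixed} and Proposition~\ref{prop:mn.irr} --- but several details are off in ways that would derail a rigorous argument. Merging is not pure Minkowski sum: each face of $\Delta_{u\ast v}$ equals $(F_u+F_v)\cap[0,1]^n$, and the cube intersection is essential. The paper works with faces of the regular subdivision $\Delta_u$ (for matroid ranks, the independence polytope), not ``base polytopes.'' Most importantly, a GS valuation is a function, not a single polytope; your proposed reduction to decomposing one integer generalized permutahedron discards the function values and is not equivalent to the conjecture. Finally, your signed-coefficient obstruction (Aguiar--Ardila) governs Minkowski decomposition in a fixed $\R^n$; once one may enlarge to $\R^m$ and then project via endowment, it is unclear that this obstruction survives --- which is exactly why the general conjecture remains open rather than immediately falling to that argument.
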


The larger $m(n)$ is, the more complex the starting class of weighted ranks one must start with. If $m(n) > n$, then the generators of $\MBV_{m(n),n}$ contains functions which are not in $\GS_n$, and thus the less attractive it is to represent gross substitutes valuations as matroid based valuations. Therefore, it is important to establish a lower-bound for $m(n)$, and in particular, to verify this conjecture for the case $m(n) = n$. Our main result completely characterizes the relationship between $\MBV_{n,n}$ and $\GS_n$ for all $n$.

\begin{theorem}\label{thm:main}
For $n \leq 3$, $\MBV_{n,n} = \GS_n$. For $n \geq 4$, $\MBV_{n,n}~\subsetneq~\GS_n$. 
\end{theorem}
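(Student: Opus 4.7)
The plan is to handle the two directions separately.

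For the inclusion $\GS_n \subseteq \MBV_{n,n}$ when $n \leq 3$, I would argue by direct construction using the small size of the combinatorial classification of gross substitutes on $[n]$. The case $n = 1$ is immediate: any $v \in \GS_1$ with $v(\{1\}) = a \geq 0$ is the rank of the matroid on $\{1\}$ weighted by $a$. For $n = 2, 3$ I would enumerate the combinatorial types of M-concave functions (equivalently, the types of weighted generalized permutohedra in dimension at most three) and write each explicitly as a merging of endowments of weighted matroid ranks on subsets of $[n]$. Since there are finitely many types, this reduces to a finite, tractable case analysis.

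For the strict inclusion $\MBV_{n,n} \subsetneq \GS_n$ when $n \geq 4$, my strategy is to isolate a structural invariant of $\MBV_{n,n}$ and exhibit an element of $\GS_n$ that violates it. Weighted matroid rank functions are M-concave functions with a very rigid tropical/polyhedral structure: their Newton polytopes are matroid polytopes and their regions of linearity correspond to flats. I would first track how merging (viewed as infimal convolution on concave envelopes) and endowment (viewed as partial maximization in a subset of coordinates) propagate these constraints. In particular, I expect that every $v \in \MBV_{n,n}$ has a Newton polytope expressible as a Minkowski sum of coordinate projections of matroid polytopes already living in $[0,1]^n$, and that the induced vertex weights inherit a discrete/integrality constraint from the matroid ranks of the generators.

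Having isolated such an invariant, I would then construct an explicit $v \in \GS_4$ violating it. A natural source of candidates is an M-concave function descending from a valuated matroid on a larger ground set, or a carefully chosen weighted sum whose associated generalized permutohedron exhibits a non-matroidal combinatorial type on $[4]$. I would then verify $v \in \GS_4$ using the M-exchange axiom directly (this should be the content of the lemma referenced in the acknowledgments), and argue that $v \notin \MBV_{4,4}$ via the invariant above. To pass from $n = 4$ to all $n \geq 5$, I would embed the $n=4$ counterexample by padding with dummy items of zero value, so a hypothetical MBV-expression on $[n]$ would restrict to one on $[4]$.

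The principal obstacle is the non-membership step: because $\MBV_{n,n}$ is a closure, ruling out $v$ requires an argument that applies to \emph{every} conceivable construction tree of mergings and endowments. Without a clean closed-form characterization of $\MBV_{n,n}$, one must either exhibit a combinatorial invariant preserved by both operations or induct on construction complexity. I expect the bulk of the technical work to lie in identifying the right invariant --- plausibly a specific cell in the regular subdivision of $[0,1]^n$ induced by $v$, or a specific exchange pattern between pairs of sets --- that is preserved by merging and endowment of weighted matroid ranks on subsets of $[n]$ but is demonstrably violated by the candidate $v$.
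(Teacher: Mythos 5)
Your plan for $n\le 3$ (enumerate the finitely many combinatorial types of $M^\natural$-subdivisions of $[0,1]^n$ and decompose each cone explicitly) is precisely what the paper does, so that direction is fine.

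For $n\ge 4$ your outline correctly identifies the shape of the argument but is missing the one idea that makes the non-membership step tractable, and contains a genuine error in the lift to larger $n$. The paper's key observation (Corollary~\ref{rem:irr}) is that if $v$ has \emph{full} ground set $[n]$ then $v$ cannot arise as an endowment $\partial^T u$ for nonempty $T$, since endowment strictly shrinks the ground set; therefore the only way such a $v$ can be in $\MBV_{n,n}$ and be \emph{irreducible under merging} is if $v$ is itself one of the generators, i.e.\ a weighted matroid rank on $[n]$. This collapses the ``rule out every possible construction tree'' problem --- which you flag as the principal obstacle --- to two concrete finite checks on a single candidate: (i) $v$ is irreducible under merging (Lemma~\ref{lem:clique.irr}), and (ii) $v$ is not a weighted matroid rank (Lemma~\ref{lem:notin.mbv}). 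The invariant you are searching for is simply ``being a weighted matroid rank,'' once irreducibility and full ground set have been established; you don't need to track Newton polytopes through arbitrary merge/endowment trees.

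The padding step is incorrect. If you extend a counterexample $v\in\GS_4\setminus\MBV_{4,4}$ to $v'$ on $[n]$ by making items $5,\dots,n$ free, there is no reason a putative MBV-expression for $v'$ over $[n]$ must restrict to one over $[4]$: the generators of $\MBV_{n,n}$ include weighted ranks of matroids on all of $[n]$, which are strictly more than those available at $n=4$, and the construction tree may route through intermediate valuations whose ground sets genuinely involve $\{5,\dots,n\}$ before an endowment contracts them away. Indeed the paper notes that the very counterexamples used (partition valuations) \emph{are} in $\MBV_{n+r,n}$ for some $r\ge 1$, so giving yourself extra coordinates can destroy non-membership. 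The paper avoids this by constructing a fresh counterexample for each $n$: the partition valuation $v^{\pi,a,b}$ with $\pi=\{\{1,2\},\{3,4,\dots,n\}\}$ lives on the full ground set $[n]$ and has every part of size at least two, which is exactly what the irreducibility argument requires. You would need to do likewise rather than pad.
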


For $n \geq 4$, we introduce a new family called partition valuations, and show through direct calculations that they are in $\GS_n$ but not in $\MBV_n$. For the cases $n = 2$ and $n = 3$, our proof uses the combinatorial characterization of gross substitutes (cf. Theorem \ref{thm:murota}) to breakup $\GS_n$ into finitely many cones, each indexed by a particular \emph{regular subdivision} of $[0,1]^n$. We then give an explicit decomposition of valuations in each cone as a merge of unit demands. 

If weighted matroid ranks on $[n]$ cannot generate $\GS_n$ through merging and endowment, then what is the minimal set of generators? The second half of our paper collects various partial results to address this question. We show that the set of minimal generators contains the set of valuations defined on $2^{[n]}$ which are \emph{irreducible} under merging. We characterize the set of irreducibles amongst matroid-based valuations. It shows that merging is strongly tied to matroid union. 

\begin{theorem}\label{thm:ce.irreducible}
A weighted rank valuation on $[n]$ is irreducible with respect to merging if and only if the corresponding matroid is irreducible with respect to union. 
\end{theorem}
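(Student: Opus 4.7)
I would prove both directions, with the forward implication being a direct translation of the matroid union theorem and the converse requiring real work.

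$(\Leftarrow)$ \emph{Matroid reducible implies valuation reducible.} Suppose $M = M_1 \vee M_2$ is a non-trivial matroid union. Then for every $w \geq 0$,
\[
u_{M, w} \;=\; u_{M_1, w} \,\square\, u_{M_2, w}.
\]
To see this, invoke the matroid partition theorem: every independent set $I \in \mathcal{I}(M_1 \vee M_2)$ decomposes as a disjoint union $I_1 \sqcup I_2$ with $I_j \in \mathcal{I}(M_j)$, and conversely every such disjoint union is independent in $M_1 \vee M_2$. Maximizing $w(I_1) + w(I_2)$ over such pairs inside a prescribed set $S$ is precisely $(u_{M_1, w} \,\square\, u_{M_2, w})(S)$. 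Non-triviality of the matroid decomposition carries over to non-triviality of the merge.

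$(\Rightarrow)$ \emph{Valuation reducible implies matroid reducible.} Suppose $u_{M, w} = v_1 \,\square\, v_2$ with $v_1, v_2 \in \GS_n$ both non-trivial. The strategy is to extract matroids $M_1, M_2$ on $[n]$ from this decomposition, show each $v_i = u_{M_i, w}$ is itself a weighted rank, and then conclude $M = M_1 \vee M_2$ by applying $(\Leftarrow)$ together with the fact that weighted rank determines its matroid on the support of $w$. Concretely: for every $S \subseteq [n]$ the merge yields a partition $S = T \sqcup T'$ with $v_1(T) + v_2(T') = u_{M, w}(S) = w(B_S)$, where $B_S$ is the max-weight basis of $M|_S$ selected by the greedy algorithm. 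By varying $S$ (and perturbing $w$ slightly to break ties), I would collect a family of subsets $\mathcal{B}_1$ obtained as $B_S \cap T$, with an analogous family $\mathcal{B}_2$ for the other side, and argue that each forms the set of bases of a matroid on $[n]$. The M$^\natural$-concavity of the $v_i$, the discrete convex-analytic form of the gross substitutes condition, is exactly what enforces the basis exchange axiom on these extracted families.

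\textbf{Main obstacle.} The heart of the proof is showing that the extracted families genuinely satisfy the matroid basis exchange axiom, so that $v_i$ is indeed a weighted rank. A priori, $v_1, v_2$ are only known to lie in $\GS_n$, and the optimizing partition $(T, T')$ is far from unique, so the construction is ambiguous. The plan is to exploit a generic perturbation $w + \varepsilon$ that singles out a unique max-weight basis of each $M|_S$ and a unique optimal partition, transfer the local exchange property of the $v_i$ through the partition, and then pass to the limit $\varepsilon \to 0$. One must additionally verify that the candidate identity $v_i = u_{M_i, w}$ holds on \emph{every} subset of $[n]$, not merely those visited by the extraction; this amounts to checking that the pieces of the decomposition fit together consistently across all $S$, which is where the interplay between the M$^\natural$-concavity of $v_i$ and the matroid structure of $M$ does the most work.
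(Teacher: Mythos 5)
Your $(\Leftarrow)$ direction (matroid reducible implies weighted rank reducible) is correct and is essentially the paper's computation: expand $\rho^{w^1} \ast \rho^{w^2}$ and use the description of the independent sets of a matroid union to collapse the nested maxima into $\rho^w$. No issue there.

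Your $(\Rightarrow)$ direction, however, rests on a claim that is false. You propose to take a non-trivial factorization $\rho^w = v_1 \ast v_2$ with $v_1, v_2 \in \GS_n$, extract matroids $M_1, M_2$, and show $v_i = u_{M_i,w}$, i.e.\ that the factors are themselves weighted ranks. They need not be. Already for $n = 2$, let $M = U_{2,2}$ with $w = (1,1)$, so $\rho^w(S) = |S|$. Take
\[
v_1(\emptyset)=0,\ v_1(\{1\})=1,\ v_1(\{2\})=0.5,\ v_1(\{1,2\})=1.2,
\]
\[
v_2(\emptyset)=0,\ v_2(\{1\})=0.5,\ v_2(\{2\})=1,\ v_2(\{1,2\})=1.2.
\]
Both are gross substitutes (in $n=2$ this just requires $\max(a,b)\le c\le a+b$), and one checks $v_1 \ast v_2 = \rho^w$: the singleton values are $\max(1,0.5)=1$ and $\max(0.5,1)=1$, and $v_1\ast v_2(\{1,2\})=\max(1.2,\,1+1,\,0.5+0.5,\,1.2)=2$. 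But $v_1$ is not the weighted rank of any matroid on $\{1,2\}$: $U_{2,2}$ would force $v_1(\{1,2\})=1.5$ and $U_{1,2}$ would force $v_1(\{1,2\})=1$, while loops/coloops force a singleton value to be $0$. So the very first step of your extraction, ``show each $v_i$ is a weighted rank,'' fails, and no amount of generic perturbation in $w$ can repair it, because the obstruction is in the factors, not in the tie-breaking.

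The paper avoids this entirely by not trying to classify the factors. It shows instead that when $M$ is irreducible, \emph{every} factorization of $\rho^w$ is trivial. The mechanism is geometric: the independence polytope $P$ of $M$ is a face of the regular subdivision $\Delta_{\rho^w}$; a separate lemma (the geometry of merging, Lemma~\ref{lem:gs.mixed}) shows that any face of $\Delta_{u\ast v}$ is of the form $(F_u + F_v)\cap[0,1]^n$ for faces $F_u,F_v$ of $\Delta_u,\Delta_v$; and Proposition~\ref{prop:mn.irr} shows that if $M$ is union-irreducible then $P$ is $M$-irreducible as a polytope (cannot be written as $(P_1+P_2)\cap[0,1]^n$ non-trivially). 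Combining these forces one factor's face at $P$ to be $\{0\}$, and a short monotonicity argument then shows the other factor equals $\rho^w$ on all of $2^{[n]}$. The thing your sketch is missing is precisely this shift of viewpoint: you don't extract a matroid from each factor (impossible), you show the geometry of the independence polytope forbids any non-trivial splitting.
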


Characterizing irreducible matroid is a long-standing open problem posed by Welsh \cite[Problem 12.3.9]{oxley2006matroid}. General solutions are known only for binary matroids \cite{lovasz1973sum,cunningham1979binary,dawson1985decomposition,recski1989some}. Even if one accepts matroid irreducibility as a blackbox criterion, Theorem \ref{thm:main} says that their weighted ranks do not exhaust the set of all irreducibles. It follows from the proof of Theorem \ref{thm:main} that the partition valuations are also irreducible. For $n \geq 6$, we show that there are yet more irreducibles.

\begin{theorem}\label{thm:main2}
For $n \geq 6$, there exists irreducible gross substitutes valuations in $\GS_n$ that cannot be obtained from repeated merging and endowment of weighted ranks of matroids and partition valuations defined on ground sets $S \subseteq [n]$. 
\end{theorem}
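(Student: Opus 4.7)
The plan is to extend the strategy of Theorem \ref{thm:main} by constructing an explicit family of irreducible gross substitutes valuations on $[n]$ that lie outside even the enlarged closure obtained by adjoining partition valuations to the generating set. A natural place to look is at valuations built from combinatorial data that only becomes available when $n \geq 6$: for instance, partitions of $[n]$ with at least three nontrivial blocks of size $\geq 2$, sparse paving matroids of rank and corank both at least $3$, or, geometrically, regular subdivisions of $[0,1]^n$ whose maximal cells are neither matroid polytopes nor products of matroid polytopes across a partition of $[n]$.

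Concretely, I would fix such a candidate $v$ and carry out three steps. First, verify $v \in \GS_n$ via the combinatorial characterization (Theorem \ref{thm:murota}), reducing if possible to a check on a few $2$-faces of the associated regular subdivision. Second, establish irreducibility under merging by the same argument that works for partition valuations: the regular subdivision of $v$ admits no nontrivial decomposition as a Minkowski sum involving a matroid polytope subdivision, which via the correspondence behind Theorem \ref{thm:ce.irreducible} precludes any nontrivial merge. Third, and this is the substantive step, show that $v$ is not an endowment of any iterated merge of weighted matroid ranks and partition valuations on subsets $S \subseteq [n]$.

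For that third step, I would exploit how merging and endowment act on subdivisions: merging produces a Minkowski-mixed subdivision of the factor subdivisions, while endowment restricts to a face. Thus the subdivision associated to any element of the enlarged closure is a face of an iterated mixed subdivision of matroid polytope subdivisions and partition-block subdivisions. I would identify a combinatorial invariant of some cell of $v$'s subdivision---such as a slope pattern on a chosen $2$-face, or a maximal cell that is not a product of matroid polytopes along any subpartition---that cannot appear in any such iterated mixed-then-restricted subdivision. This is the main obstacle, since the generating set is strictly larger than in Theorem \ref{thm:main} and the combinatorics of iterated mixed subdivisions is rich. If a direct geometric argument becomes intractable, a fallback is a dimension count: parameterize both the candidate family and the image of the merge-and-endow map applied to finite collections of matroid and partition generators on subsets of $[n]$, and show the latter has strictly smaller dimension once $n \geq 6$. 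In either route, the threshold $n \geq 6$ should emerge naturally from the minimum $n$ needed to host the obstructing combinatorial substructure (e.g.\ three blocks of size two).
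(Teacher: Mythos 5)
Your overall strategy is aligned with the paper: construct an explicit irreducible valuation in $\GS_n$ and show it falls outside the enlarged generating set. The irreducibility argument you sketch (exhibit an $M$-irreducible full-dimensional face of the subdivision) is exactly the mechanism of Lemma~\ref{lem:gs.mixed} and Proposition~\ref{prop:mn.irr}. However, there is a genuine gap in your step 3, and you also leave the construction itself unspecified.

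The gap: you treat the exclusion ``$v$ is not an endowment of any iterated merge of weighted matroid ranks and partition valuations'' as the substantive step, proposing either a combinatorial invariant stable under iterated mixed-then-restricted subdivisions or a dimension count. Neither is needed. Once $v$ is irreducible \emph{and} has full ground set $[n]$, Corollary~\ref{rem:irr} collapses step 3 to the elementary check that $v$ is not itself a generator: any nontrivial endowment strictly shrinks the ground set and so cannot yield $v$, while irreducibility means any merge producing $v$ already has a factor equal to $v$; by well-founded induction on the derivation, $v$ would have to be a generator. So the only thing left to verify is that $v$ is neither a weighted matroid rank nor a partition valuation. Because your proposal does not use this reduction, you set yourself an unnecessarily hard (and unfinished) task of controlling the combinatorics of arbitrary iterated mixed subdivisions.

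Secondly, you never commit to a concrete $v$, and the explanation you float for the $n \geq 6$ threshold (``three blocks of size two'') is not the relevant one. The paper takes a loopless irreducible matroid $\E$ of rank $\geq 2$ (the smallest being $M(K_4)$ on $6$ elements; Lemma~\ref{lem:no.irr.5} shows none exist for $n \leq 5$), picks a non-constant weight $w$, and perturbs the weighted rank by $v = \rho^w + c(\mathbf{1} - \mathbf{1}_\emptyset)$. This splits the origin away from the independence polytope, producing an $M$-irreducible full-dimensional face $Q$ that witnesses irreducibility; non-constancy of $w$ gives $v$ at least three distinct nonzero values, ruling out partition valuations, and a direct computation on a rank-$2$ independent pair rules out weighted matroid ranks. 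The threshold $n \geq 6$ is forced by the existence of the starting matroid, not by partition-block arithmetic.
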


The proof of Theorem \ref{thm:main2} constructs a large class of irreducible valuations from the regular subdivision of $[0,1]^n$ induced by rank functions of irreducible matroids. It gives important insights on the geometry of the merging operation. Intuitively, merging tend to produce 'smoother' functions with larger regions of lineality (cf. Lemma \ref{lem:gs.mixed}). This means any gross substitutes valuation with small regions of lineality tend to be irreducible. Furthermore, merging is not a local operation, so small local changes in irreducibles create more irreducibles (cf. Proposition \ref{prop:modify}). These observations indicate that merging is not a rich enough operations to generate $\GS_n$ from a small subset of valuations in $\GS_n$.

The endowment operation kicks in when we consider $m(n) > n$, as it allows one to merge functions defined on $2^m$ parameters before restricting down to a subset of $2^n$ values. In particular, thanks to an anonymous referee, we learned that each partition valuation is in $\MBV_{n+r,n}$ for some integer $r$ between $1$ and $n/2$. 
It remains an open problem whether the irreducibles of Theorem \ref{thm:main2} are in $\MBV$. Based on the geometry of merging discussed above, we speculate that the MBV conjecture is unlikely to hold, and even if it does, $m(n)$ may be very large. Going forward with constructive descriptions of gross substitutes, one may want to supplement merging and endowment with other operations such as matroid rank sums \cite{shioura2015gross} or tree-concordant sum \cite{balkanski2018construction}.

\subsection*{Organization} Section \ref{sec:background} presents the combinatorial view of gross substitutes and the connections between merging and matroid union. The proofs of the three theorems are presented in Sections \ref{sec:main} to \ref{sec:main2}. 
We conclude with a brief summary in Section~\ref{sec:summary}.
\vskip12pt
\noindent \textbf{Notations.} For a set $Q \subset \R^n$, $\conv(Q)$ denote its convex hull. A lattice polytope, also known as an integer polytope, is a polytope whose vertices are in $\Z^n$. For an integer $n$, let $[n] = \{1,2,\dots,n\}$. By an abuse of notation we use the same notation for both a subset $S \subseteq [n]$ and its indicator vector $e_S \in \{0,1\}^n$, trusting that no confusions will arise. Definition of matroid terminologies can be found in \cite{oxley2006matroid}.

\section{Gross substitutes as generalization of matroid ranks}\label{sec:background}
Fix an integer $n \in \mathbb{N}$. A function $u: 2^{[n]} \to \R_{\geq 0}$ is a \emph{valuation} if $u(\emptyset) = 0$ and $u(S) \leq u(T)$ if $S \subseteq T$. We shall extend the domain of $u$ to $\mathbb{Z}^n$ by defining $u(a) = -\infty$ for $a \notin \{0,1\}^n$. The Legendre-Fenchel transform of $u$ is the function $u^\vee: \R^n \to \R$ given by
\begin{equation}\label{eqn:f}
u^\vee(p) = \sup_{a \in \mathbb{Z}^n}(u(a) - \langle p, a \rangle) = \max_{a \in \{0,1\}^n}(u(a) - \langle p,a \rangle). 
\end{equation}
In an economy with $n$ indivisible objects, an agent with valuation $u$ has indirect utility $u^\vee$, which takes a price vector $p \in \R^n$, and maps it to the best utility that she can make under this price. 
For each $p \in \R^n$, the \emph{cell} or \emph{demand set} $\sigma \subset \{0,1\}^n$ supported by $p$ is the set of $a \in \{0,1\}^n$ that achieves the maximum in \eqref{eqn:f}. Its convex hull $\conv(\sigma) \subset [0,1]^n$ is called a \emph{face} supported by $p$. The union of faces supported by $p$ over all $p \in \R^n$ fits together to form a polyhedral complex called the regular subdivision of $[0,1]^n$ induced by $u$, denoted $\Delta_u$ \cite[\S 2.3]{detriangulations}. Here we emphasize that a cell is a set of integer points, while a face is the convex hull of such points and thus is a convex polytope. This distinction is important in economics applications, see \cite{danilov2001discrete,danilov2003gross, tran2015product}. 

Valuations with the \emph{gross substitutes} property enjoy many desirable properties for economics applications and thus have been intensively studied \cite{kelso1982job,gul1999walrasian,danilov2001discrete,ausubel2002ascending,reijnierse2002verifying,bing2004presentation,hatfield2005matching,lehmann2006combinatorial,leme2017gross,balkanski2018construction,tran2015product,baldwin2019understanding}. Since Kelso and Crawford's original definition \cite{kelso1982job}, many equivalent characterizations have been found. For a very recent and comprehensive survey, we recommend \cite{leme2017gross}. 
The characterization most relevant for our approach is in terms of the regular subdivisions $\Delta_u$. Since this is not listed in \cite{leme2017gross}, we include a short proof. Effectively, we take this characterization to be the definition of gross substitutes. 

\begin{definition}
Say that a lattice polytope $P \subseteq \R^n$ is $M^\natural$ if its edges are parallel to one of the vectors in $\{e_i-e_j,e_i: i,j \in [n], i \neq j\}$. 
\end{definition}

\begin{theorem}\label{thm:murota}
A valuation $u: \{0,1\}^n \to \R_{\geq 0}$ is gross substitutes if and only if all faces of the regular subdivision $\Delta_u$ are $M^\natural$ polytopes.
\end{theorem}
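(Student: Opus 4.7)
The plan is to reduce Theorem \ref{thm:murota} to the standard demand-based characterization of gross substitutes due to Murota (see e.g.\ \cite{leme2017gross, murota2003discrete}): $u$ is gross substitutes if and only if, for every price $p \in \R^n$, the cell $\sigma(p) = \argmax_{a \in \{0,1\}^n}(u(a) - \langle p, a \rangle)$ satisfies the \emph{single exchange property} --- for any $a, b \in \sigma(p)$ and any coordinate $i$ with $a_i > b_i$, there exists an index $j$ (with the convention $e_0 := 0$, so that $j = 0$ is allowed) with $a_j < b_j$ such that both $a - e_i + e_j$ and $b + e_i - e_j$ also lie in $\sigma(p)$. Taking this as a black box, and using that the cells of $\Delta_u$ are precisely the sets $\sigma(p)$ while the faces of $\Delta_u$ are their convex hulls, the theorem reduces to a purely polyhedral equivalence: a set $\sigma \subseteq \{0,1\}^n$ satisfies the single exchange property if and only if $\conv(\sigma)$ is an $M^\natural$-polytope.

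For the forward direction of this polyhedral equivalence, I would argue directly on edges. Let $[a, b]$ be an edge of $\conv(\sigma)$, supported by a linear functional $\ell$ attaining its maximum $c$ over $\conv(\sigma)$ exactly on $[a, b]$. Pick a coordinate $i$ with $a_i > b_i$ and apply the single exchange property to produce $a' = a - e_i + e_j$ and $b' = b + e_i - e_j$ in $\sigma$ for some $j$. Since $a' + b' = a + b$, one has $\ell(a') + \ell(b') = 2c$; because $\ell(a'), \ell(b') \leq c$, both must equal $c$, forcing $a', b' \in [a,b]$. Hence $a - a' = e_i - e_j$ (or $e_i$, when $j = 0$) is a nonzero scalar multiple of $a - b$, and the edge $[a, b]$ lies in an allowed $M^\natural$-direction.

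For the reverse direction I would appeal to the classical structural result in discrete convex analysis (Murota, \cite[Ch.~4]{murota2003discrete}) identifying $\{0,1\}$-polytopes with $M^\natural$-edges as base polytopes of generalized matroids; the lattice points of such a polytope automatically satisfy the single exchange property. A self-contained proof would proceed by induction on the dimension of $\conv(\sigma)$, showing that at any vertex $a \in \sigma$ there is an edge emanating in a direction of the form $-e_i + e_j$ or $-e_i$ that advances toward a given $b \in \sigma$, then recursing on the resulting face.

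The main obstacle is precisely this reverse polyhedral direction. The hypothesis constrains only \emph{edge directions} of the polytope, yet one must extract a combinatorial exchange between \emph{arbitrary} lattice points of $\sigma$, not merely edge-adjacent ones, so an edge-walk or inductive argument is required. Once this polyhedral equivalence is in hand, combining it with the demand-based characterization immediately yields Theorem \ref{thm:murota}.
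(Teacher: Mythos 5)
Your proposal is correct but takes a genuinely different route from the paper. The paper's proof is a two-line citation chain: it invokes Fujishige \cite[Theorem~17.1]{Fujishige05} for the equivalence between gross substitutes and $M^\natural$-concavity on $\{0,1\}^n$, and then Murota \cite[Theorem~6.30]{murota2003discrete} for the equivalence between $M^\natural$-concavity and the regular subdivision $\Delta_u$ having only $M^\natural$ faces. No polyhedral argument is made directly. You instead start from the cell-level exchange characterization of gross substitutes (demand sets are $M^\natural$-convex sets) and then bridge to the polytope statement via the equivalence ``$\sigma \subseteq \{0,1\}^n$ satisfies simultaneous exchange iff $\conv(\sigma)$ is an $M^\natural$-polytope.'' Since for $\sigma \subseteq \{0,1\}^n$ the hole-free condition is automatic (every $\{0,1\}$-vector in $\conv(\sigma)$ is a vertex), the two formulations really do coincide cell by cell, so your reduction is sound. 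Your forward-direction argument (picking an edge $[a,b]$, applying exchange, and using $a'+b'=a+b$ plus the supporting functional to force $a'=b$) is a clean, self-contained piece of work that the paper simply does not do; your reverse direction remains a citation to the generalized-matroid/base-polytope structure theory, so on balance the two proofs are comparably citation-heavy, just routed through different standard theorems. What the paper's route buys is brevity; what yours buys is an explicit look at why edge directions and exchange are the same thing, which is pedagogically useful and makes the statement less of a black box.
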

\begin{proof}
By  \cite[Theorem 17.1]{Fujishige05}\footnote{Fujishige attributed this result to an unpublished result of Tomizawa in 1983 \cite[p. 332]{Fujishige05} and proved in \cite{fujishige2003note}. The same result was also independently discovered by Danilov, Koshevoy and Lang \cite{danilov2003gross} and Gelfand, Goresky, MacPherson and Serganova \cite{gelfand1987combinatorial}.}, $u$ is gross substitutes if and only if it is an $M^\natural$-concave function on $\{0,1\}^n$. By Murota \cite[Theorem 6.30]{murota2003discrete}, this happens if and only if all faces of the regular subdivision $\Delta_{u}$ are $M^\natural$ polytopes.
\end{proof}


\begin{example}[Weighted matroid rank]
Fix a matroid $\E = ([n],\mathcal{I})$ with independence sets $\mathcal{I}$. The weighted matroid rank, or weighted matroid valuation $\rho^w: 2^{[n]} \to \R_{\geq 0}$ with weight $w \in \R^n_{\geq 0}$ is defined by
\vskip-0.3cm
\begin{equation}\label{eqn:weighted.matroid.val}
\rho^w(T) := \max\{\sum_{i \in I}w_i: I \in \mathcal{I}, I \subseteq T\}. 
\end{equation}
\vskip-0.1cm \noindent
View $\rho^w$ as a function from $\{0,1\}^n \to \R$. Then $\rho^w$ is gross substitutes \cite[equation (2.78)]{murota2003discrete}. When $w$ is the all-one vector $\mathbf{1}$, $\rho^{\mathbf{1}}$ is the rank function of $\E$.
\end{example}


\subsection{$\GS_n$ as a generalization of matroid rank functions} 
In particular, matroid union, contraction, deletion and duality are operations that produce new matroid ranks from old, and these can all be generalized into operations on valuations that preserve the gross substitutes property. The first two operations are of particular interests to economics, and they are known as merging and endowment, respectively.   

\begin{definition}[Merging and endowment]
For subsets $E^1,E^2 \subseteq [n]$, the merge of valuations $u^1: 2^{E^1} \to \R_{\geq 0}$ and $u^2: 2^{E^2} \to \R_{\geq 0}$ is the valuation $u^1 \ast u^2: 2^{E^1 \cup E^2} \to [n]$ given by
$$ u^1 \ast u^2(S) := \max_{T \subseteq S} (u(T) + v(S\backslash T)) \mbox{ for all } S \subseteq E^1 \cup E^2. $$
The endowment of a valuation $u: 2^{[n]} \to \R_{\geq 0}$ by a subset $T \subseteq [n]$ is the valuation $\partial^Tu: 2^{[n]\backslash T} \to \R_{\geq 0}$, given by
$$ \partial^Tu(S) = u(S\cup T) - u(T) \mbox{ for all } S \subseteq [n] \backslash T. $$
\end{definition}
In economics terms, $u \ast v$ is the valuation of a company formed by the merge of two agents with valuations $u$ and $v$, respectively. The endowment by $T$ is the valuation of an agent who started with $T$, so it measures how much another subset of items $S$ adds to what she already had. In the literature, merging is also called convolution \cite[\S 6]{murota2003discrete} and endowment is marginal valuation \cite{ostrovsky2015gross}. Both of these operations preserve gross substitutability \cite[Theorem 6.15]{murota2003discrete}. On matroid ranks, merging corresponds to matroid union and endowment corresponds to matroid contraction. 
\begin{lemma}
Consider matroids $\E^1 = (E^1,\mathcal{I}^1)$ and $\E^2 = (E^2,\mathcal{I}^2)$ with rank functions $\rho^1$ and $\rho^2$ respectively. Then $\rho^1 \ast \rho^2$ is the rank function of the matroid union $\E^1 \vee \E^2$, which is defined to be the matroid on $E^1 \cup E^2$ with independence sets
$$ \mathcal{I} = \{I^1 \cup I^2: I^1 \in \mathcal{I}^1,I^2 \in \mathcal{I}^2\}. $$
For $T \subseteq E^1$, the endowment $\partial^{T}u$ is the rank function of the matroid obtained as the contraction of $\E^1$ to $T$. 
\end{lemma}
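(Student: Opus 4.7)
The plan is to verify each of the two claims by unwinding the definitions of merging and endowment and matching them term by term against the standard rank formulas for matroid union and matroid contraction. Both checks are short combinatorial calculations, so I would organize the proof as two independent arguments, one per claim.

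For the merging--union identity, I would establish the two inequalities between $(\rho^1 \ast \rho^2)(S)$ and the rank of $S$ in $\E^1 \vee \E^2$ separately. Given any independent set $J \subseteq S$ of the union matroid, write $J = I^1 \cup I^2$ with $I^i \in \mathcal{I}^i$; by the hereditary axiom I may discard the elements in $I^1 \cap I^2$ from $I^2$ without changing $J$, so I may assume $I^1$ and $I^2$ are disjoint and $|J| = |I^1| + |I^2|$. Setting $T := I^1 \subseteq S \cap E^1$ gives $I^2 \subseteq S \setminus T$, whence
\[ |J| \le \rho^1(T) + \rho^2(S \setminus T) \le (\rho^1 \ast \rho^2)(S). \]
Conversely, for any $T \subseteq S$, a maximum independent set $I^1$ in $T \cap E^1$ under $\E^1$ and a maximum independent set $I^2$ in $(S \setminus T) \cap E^2$ under $\E^2$ are automatically disjoint, and their union is independent in $\E^1 \vee \E^2$, sits inside $S$, and has size $\rho^1(T) + \rho^2(S \setminus T)$. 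Taking the maximum over $T$ yields the reverse inequality.

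The endowment--contraction identity is essentially a one-line definitional check: the contraction $\E^1 / T$ has ground set $E^1 \setminus T$ and rank function $\rho_{\E^1 / T}(S) = \rho^1(S \cup T) - \rho^1(T)$, which is exactly the definition of $\partial^T \rho^1(S)$. There is no genuine obstacle in this lemma; the only step that uses anything beyond definitions is the disjointness reduction in the matroid union argument, which is a standard one-line consequence of the hereditary axiom. A reader content to cite the matroid union rank formula from Oxley's textbook (already in the bibliography) can skip that step entirely.
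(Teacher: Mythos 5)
The paper does not prove this lemma directly: it disposes of both claims by citation, attributing the merge--union identity to Pym and Perfect and to Oxley (\cite{pym1970submodular}, \cite[p.~410, problem 8]{oxley2006matroid}) and the endowment--contraction identity to \cite[Proposition 3.1.6]{oxley2006matroid}. Your direct combinatorial verification is therefore a genuinely different route, and it is correct. The two-inequality argument for the union part is the standard one, and your observation that the hereditary axiom lets one disjointify $I^1$ and $I^2$ is exactly the needed reduction; the contraction part is indeed a definitional match with Oxley's rank formula for $\E^1/T$. Two small points worth making explicit if you were to write this up in full. First, since $S \subseteq E^1 \cup E^2$ but $\rho^1,\rho^2$ are only defined on $2^{E^1}, 2^{E^2}$, the merge formula implicitly reads $\rho^i(T)$ as $\rho^i(T \cap E^i)$ (equivalently, extend each matroid to $E^1\cup E^2$ by adding loops); your proof uses this convention consistently, but it deserves a sentence. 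Second, the lemma's phrasing already asserts that $\E^1 \vee \E^2$ \emph{is} a matroid, which is the nontrivial Nash--Williams/Edmonds matroid union theorem; your computation shows that the size of a largest member of $\mathcal{I}$ inside $S$ equals $(\rho^1\ast\rho^2)(S)$, which is the rank function \emph{provided} one already knows the union is a matroid, so that fact should still be cited. Neither point is a gap in the mathematics, only in what is stated explicitly; your proof buys self-containment at the cost of those two footnotes, whereas the paper's citation approach buys brevity.
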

\begin{proof}
The first result is due to \cite{pym1970submodular}, see also \cite[p410,problem 8]{oxley2006matroid}. The second statement is \cite[Proposition 3.1.6]{oxley2006matroid}.
\end{proof}


\begin{example}[OXS, EAV, transversal matroids and gammoids]\label{ex:unit}
A \emph{unit demand} valuation has the form $u: 2^{[n]} \to \R_{\geq 0}$ by
$$ u(T) = \max_{i \in T} w_i $$
for some $w \in \R^n_{\geq 0}$. It is also the weighted rank of the uniform matroid $U_{1,S}$ of rank 1 on the ground set $[n]$. Such a valuation can represented graphically as a bipartite graph with $[n]$ nodes on the left, labelled $1$ to $n$, and one node $A$ on the right, with $w_i$ the weight of the edge $(1,A)$. The class of valuations generated by merging finitely many unit demands is called OXS \cite{lehmann2006combinatorial} or max bipartite matching valuations \cite[\S 2]{murota2003discrete}. For concrete examples, see Figures \ref{fig:type0} to \ref{fig:type.last}. 
The EAV class generalizes OXS by allowing for repeated applications of merging and endowment. 
When the weights that define the individual unit demands are binary vectors, then their merge is the rank of a transversal matroid \cite[Proposition 12.3.7]{oxley2006matroid}, while the closure of transversal matroids under endowment is gammoids \cite[Proposition 3.2.10]{oxley2006matroid}. Thus, OXS generalizes transversal matroids, while EAV generalizes gammoids. 
\end{example}

\subsection{Minimal generators and irreducibles}
Since not all matroids are gammoids \cite[p411, exercise 11]{oxley2006matroid}, Remark \ref{ex:unit} hints that not all EAV are GSV. Indeed, the counter-example constructed by Paes Leme and Ostrovsky relied on the rank of function of $M(K_4)$, a matroid that is not a gammoid. On the other hand, the generating set of $\MBV_{n,n}$ contains all weighted matroid ranks (and hence all matroid ranks) of all matroids whose ground set is a subset of $[n]$. Thus it is less clear why $\MBV_{n,n}$ fails to equal to $\GS_n$ for $n \geq 4$. 

Let $\mathcal{G}_n \subset \GS_n$ be the minimal set of generators for $\GS_n$ under merging and endowment. 
Then $\mathcal{MBV}_{n,n} = \GS_n$ if and only if $\mathcal{G}_n$ is contained in the set of all weighted matroid ranks on $[n]$. 
The key idea of our paper is to identify a subset of $\mathcal{G}_n$ that is easy to work with. These are irreducible gross substitute valuations with ground set $[n]$, and they are simple because we do not need to consider endowment. In particular, they generalize the rank functions of matroids which are irreducible under merging. 
\begin{definition}
Let $v \in \GS_n$. Say that $v$ is irreducible under merging in $\GS_n$ (abbreviated irreducible) if $v = u \ast u'$ for some $u,u' \in \GS_n$ implies either $v = u$ or $v = u'$.  
\end{definition}

\begin{corollary}\label{rem:irr}
If $v \in \GS_n$ has ground set $[n]$, then $v \in \mathcal{G}_n$ if and only if it is irreducible. 
\end{corollary}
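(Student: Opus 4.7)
The plan is to unpack the definition of the minimal generating set $\mathcal{G}_n$, using the special fact that for a valuation with ground set $[n]$, endowment becomes trivial. The key observation is: if $v \in \GS_n$ has ground set $[n]$ and $v = \partial^T w$ for some $w \in \GS_n$, then the ground set of $w$ equals $[n] \cup T$ and must lie inside $[n]$, forcing $T = \emptyset$ and $w = v$. So the only non-trivial way to obtain such a $v$ from other elements of $\GS_n$ through the operations of Section~\ref{sec:background} is via a merge.

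For the direction ``irreducible $\Rightarrow v \in \mathcal{G}_n$'', I would show the stronger fact that $v$ lies in every generating set $S \subseteq \GS_n$, and in particular in $\mathcal{G}_n$. Fix $S$ and take a shortest derivation of $v$ from $S$ by merging and endowment. If its length is zero, $v \in S$ and we are done. Otherwise, by the observation above, the final step must be a merge $v = u \ast u'$ with $u, u' \in \GS_n$. Irreducibility forces $v = u$ or $v = u'$, and either outcome yields a strictly shorter derivation of $v$, contradicting minimality of the chosen derivation.

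For the converse I argue by contrapositive. Suppose $v$ has ground set $[n]$ and is not irreducible; write $v = u \ast u'$ with $u, u' \in \GS_n$ and $u, u' \neq v$. Then define $S := (\mathcal{G}_n \setminus \{v\}) \cup \{u, u'\}$. The merge $u \ast u'$ lies in the closure of $S$, so $v$ does too; hence $\mathcal{G}_n \subseteq \mathrm{cl}(S)$ and therefore $\mathrm{cl}(S) = \GS_n$. Thus $S$ is a generating set of $\GS_n$ that omits $v$, contradicting membership of $v$ in the minimal generating set.

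The one delicate point is interpreting ``minimal'' precisely in the last step: what the construction actually produces is a generating set not containing $v$, so to conclude $v \notin \mathcal{G}_n$ one uses the standard reading of ``\emph{the} minimal generating set'' as the intersection of all generating sets (equivalently, the set of generators that are essential). Combining the two directions, this intersection coincides for ground-set-$[n]$ elements with the set of irreducibles, and the corollary follows.
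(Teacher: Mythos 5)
Your proof is correct and rests on the same key observation as the paper's: a valuation with ground set $[n]$ cannot arise as a non-trivial endowment of anything in $\GS_n$, so the only operation that can produce it is merging. The paper's own proof is essentially a one-liner — it records that observation and then asserts the equivalence with irreducibility — whereas you unfold both directions explicitly (a shortest-derivation argument for ``irreducible $\Rightarrow$ essential'' and an explicit substitution $S := (\mathcal{G}_n \setminus \{v\}) \cup \{u,u'\}$ for the converse), and you also surface and resolve the definitional point the paper leaves tacit: that ``the minimal set of generators'' has to be read as the set of essential generators (equivalently, the intersection of all generating sets) for the converse step to go through. This is genuinely the only delicate spot, and your handling of it is right; the rest is a careful expansion of what the paper compresses into ``Thus it is in the set of minimal generators if and only if it is irreducible.''
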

\begin{proof}
Since $v$ has ground set $[n]$, $v$ cannot equal $\partial^Tu$ for some $u \in \GS_n$ and $T \subsetneq [n]$. Thus $v$ is can only be generated from other functions in $\GS_n$ through merging. Thus it is in the set of minimal generators if and only if it is irreducible.
\end{proof}

\section{Proof of Theorem \ref{thm:main}}\label{sec:main}

\subsection{Proof of Theorem \ref{thm:main} for $n \leq 3$}
Theorem \ref{thm:murota} implies that there are finitely many combinatorial type of gross substitute valuations indexed by the admissible regular subdivisions.
Each combinatorial type corresponds to a cone defined by linear inequalities. The proof for $n \leq 3$ enumerates all such possible regular subdivisions $\Delta_v$ for $v \in \GS_n$, and gives an explicit decomposition of $v$ in each case as a merge of unit demands (and thus are in $\MBV_n$). The case $n = 1$ is trivial. For $n=2$, there are two combinatorial types and their decompostions are given in Figure \ref{fig:n2}. For $n=3$, up to permutation there are 8 combinatorial types, and their decompositions are given in Figures \ref{fig:type0} to \ref{fig:type.last}. \qed
\begin{figure}[h!]
\includegraphics[width=0.6\textwidth]{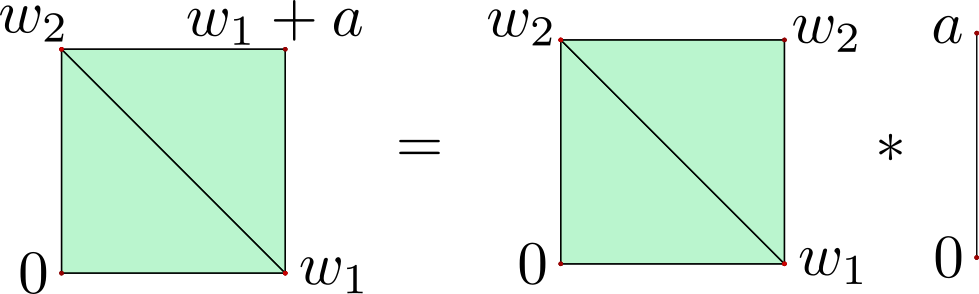} 
\vskip0.5cm
\includegraphics[width=0.6\textwidth]{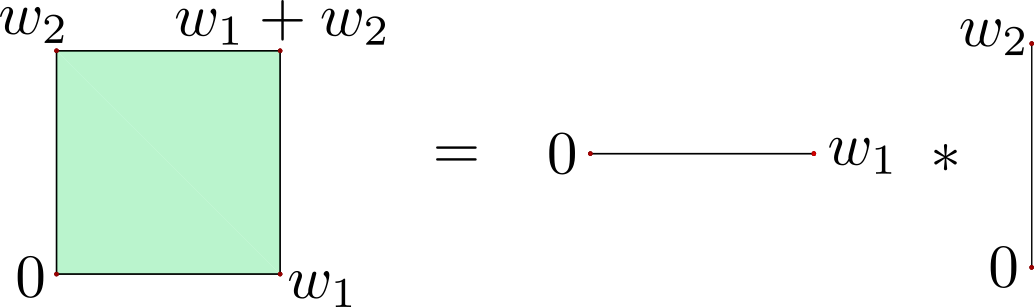} 
\caption{A short proof that $\GS_2 = \MBV_2$. There are two possible combinatorial types for valuations in $\GS_2$. In the top case, $0 \leq w_1 \leq w_2 \leq w_1 + a < w_1+w_2$. In the bottom case, $0 \leq w_1, w_2$.}\label{fig:n2}
\end{figure}


\begin{corollary}\label{cor:oxs}
For $n \leq 3$, any gross substitutes valuation $v \in \GS_n$ can be written as the convolution of unit-demand valuations. That is, for $n \leq 3$, the classes gross substitute valuations, matroid-based valuations, endowed assignment valuations and OXS are all equal.
\end{corollary}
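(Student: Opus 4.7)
The plan is to extract this strengthening directly from the case analysis already carried out for Theorem \ref{thm:main} when $n \leq 3$. The key observation is that in each of the finitely many combinatorial types of regular subdivision $\Delta_v$ realized by some $v \in \GS_n$, the explicit decomposition displayed in Figures \ref{fig:n2} and \ref{fig:type0}--\ref{fig:type.last} is not merely a decomposition into weighted matroid ranks, but a decomposition into \emph{unit demands}: each atomic piece is a weighted rank of a rank-one uniform matroid $U_{1,S}$ for some $S \subseteq [n]$. Reading off these pictures therefore shows that every $v \in \GS_n$ lies in the OXS class.

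Granting this, the equality of the four classes follows from a standard chain of inclusions. By definition one has $\mathrm{OXS} \subseteq \mathrm{EAV}$, since EAV enlarges OXS by permitting endowments in addition to merging. Each unit demand is a weighted matroid rank (Example \ref{ex:unit}), and $\MBV_{n,n}$ is closed under merging and endowment, so $\mathrm{EAV} \subseteq \MBV_{n,n}$. Finally, $\MBV_{n,n} \subseteq \GS_n$ because both merging and endowment preserve gross substitutability, as recorded after the definition of these operations. Combined with the reverse inclusion $\GS_n \subseteq \mathrm{OXS}$ from the first paragraph, all four classes coincide.

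The only real work is to inspect the figures and confirm that the atoms of every decomposition are indeed unit demands. This is the main thing to check, but it is not an obstacle of substance: in the two-dimensional case the decompositions are at most a merge of two unit demands, and in the three-dimensional case the eight combinatorial types up to permutation each admit a merge of at most three unit demands, and these decompositions were constructed with this structure in mind. Since no figure requires a genuinely higher-rank weighted matroid rank as a building block, no supplementary reduction is needed and the corollary follows.
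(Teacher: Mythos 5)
Your proposal matches the paper's own argument: the explicit decompositions in Figures \ref{fig:n2} and \ref{fig:type0}--\ref{fig:type.last} are already written as merges of unit demands (maximum bipartite matching valuations), which gives $\GS_n \subseteq \mathrm{OXS}$, and the chain $\mathrm{OXS} \subseteq \mathrm{EAV} \subseteq \MBV_{n,n} \subseteq \GS_n$ closes the loop exactly as you describe.
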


\subsection{The case $n \geq 4$}\label{sec:clique}

For each $n \geq 4$, to show that $\MBV_n \subsetneq \GS_n$, we shall exhibit functions in the set of minimal generators $\mathcal{G}_n$ which are not weighted matroid ranks. These functions are chosen from a parametrized family indexed by partitions on $[n]$. 

\begin{definition}
Let $\pi = \{\pi_1, \dots, \pi_m\}$ be a set partition of $[n]$ into $m$ parts. For $0 < a < b < 2a$, the partition valuation $v = v^{\pi,a,b}$ on $[n]$ is given by
\begin{align*}
v(\emptyset) &= 0\\
v(I) &= 
\left\{ 
\begin{array}{ccc}
a & \mbox{ if } & I \subseteq \pi_r \mbox{ for some } r \in \{1,\dots,m\} \\
b & \mbox{ else.}  &
\end{array}\right. 
\end{align*}
\end{definition}
In other words, we think of $\mathcal{S}$ as partitioning the complete graph on $[n]$ vertices into $m$ cliques (complete subgraphs). Each $I \subseteq [n]$ identifies a subgraph, and $v(I) = b$ if this subgraph contains an edge that goes between two different cliques. Otherwise, $v(I) = a$. Note that $v(I) = a$ when $I$ is a singleton. 

\begin{lemma}\label{lem:in.gs}
For $n \geq 4$, for any partition $\pi$ of $[n]$ into $m$ parts and any $0 < a < b < 2a$, $v^{\pi,a,b} \in \GS_n$. 
\end{lemma}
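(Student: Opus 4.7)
The plan is to verify the $M^\natural$-exchange property for $v := v^{\pi,a,b}$ and invoke Theorem~\ref{thm:murota}. Concretely, for every $I, J \subseteq [n]$ and every $i \in I \setminus J$, I would show
$$v(I) + v(J) \;\leq\; \max\Bigl( v(I \setminus i) + v(J \cup i),\; \max_{j \in J \setminus I} v((I \setminus i) \cup j) + v((J \cup i) \setminus j) \Bigr).$$
This is a standard equivalent to $M^\natural$-concavity (see \cite{murota2003discrete}) and hence, by Theorem~\ref{thm:murota}, to $v \in \GS_n$.

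Call $I$ \emph{monochromatic} if $I \subseteq \pi_r$ for some $r$ and \emph{polychromatic} otherwise, so $v$ takes only the three values $0, a, b$. I would organize the case analysis by the types of $I$ and $I \setminus i$, using the fact that $I$ monochromatic forces $I \setminus i$ monochromatic. The first case, $I \setminus i$ polychromatic, is immediate: $v(I \setminus i) = b \geq v(I)$ and monotonicity gives $v(J \cup i) \geq v(J)$, so the removal side already dominates. The second case, $I$ monochromatic, has $v(I \setminus i) = v(I) = a$ whenever $I \neq \{i\}$, and when $I = \{i\}$ a swap with any $j \in J$ in an appropriate block closes the inequality.

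The critical case is $I$ polychromatic but $I \setminus i$ monochromatic in some block $\pi_{r'}$, which forces $i \in \pi_s$ with $s \neq r'$ and $I \cap \pi_s = \{i\}$. The subcase where the hypothesis $b < 2a$ is indispensable is $J \subseteq \pi_s$ nonempty monochromatic: removal yields only $a + a = 2a$, strictly less than $a + b = v(I) + v(J)$. I plan to rescue by swapping with any $j \in J \setminus I$, which equals $J$ since $I \cap \pi_s = \{i\}$ and $i \notin J$; then $(I \setminus i) \cup j$ meets both $\pi_{r'}$ and $\pi_s$ (value $b$), while $(J \cup i) \setminus j \subseteq \pi_s$ is nonempty and monochromatic (value $a$), giving the tight bound $a + b$. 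The parallel subcases where $J$ is empty or monochromatic in a block other than $\pi_s$ are dispatched by removal, which already makes $J \cup i$ polychromatic.

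The main obstacle is the remaining subcase where both $I$ and $J$ are polychromatic, so LHS $= 2b$ and the exchange must preserve polychromaticity on both sides. I plan to handle this by a counting argument on the set $S_J$ of blocks met by $J$. If $|S_J| \geq 3$, any $j \in J \setminus \pi_{r'}$ succeeds; such $j$ lies automatically in $J \setminus I$ because $I \subseteq \pi_{r'} \cup \{i\}$ and $i \notin J$. If $|S_J| = 2$ and $\pi_s \in S_J$, pick $j \in J \cap \pi_s$; the removal of $j$ is compensated by $i$, so $(J \cup i) \setminus j$ still meets $\pi_s$ and the other block of $S_J$. If $|S_J| = 2$ and $\pi_s \notin S_J$, then $J \cup i$ meets three blocks and any $j \in J \setminus \pi_{r'}$ works. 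The pigeonhole $I \subseteq \pi_{r'} \cup \{i\}$ together with $i \notin J$ ensures the candidate $j$ always lies in $J \setminus I$, completing the verification.
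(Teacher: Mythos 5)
Your proof is essentially correct and takes a genuinely different route from the paper. The paper's proof is a short ``global'' argument: it constructs a maximum bipartite matching (OXS) valuation $\bar v$ on a ground set of size $n+m$ (items $[n]$ plus one item per block of $\pi$) and shows $v^{\pi,a,b} = \partial^{[n]}\bar v$, then invokes closure of $\GS$ under endowment. That approach is shorter and also yields, as a byproduct, the observation that $v^{\pi,a,b}$ is in the EAV class on a larger ground set (used later in the paper). Your approach verifies the $M^\natural$-exchange property directly, which is more elementary, stays entirely on $[n]$, and makes the dependence on the parameter constraint $b<2a$ explicit through the case analysis. Both are valid; the paper's is slicker, yours is self-contained.

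There is one slip in your case analysis, and it is precisely at the point where $b<2a$ is actually used. In the ``critical'' Case~3 ($I$ polychromatic, $I\setminus i\subseteq\pi_{r'}$ monochromatic, $i\in\pi_s$), you claim $b<2a$ is indispensable for $J\subseteq\pi_s$ nonempty monochromatic. But there the swap gives exactly $a+b = v(I)+v(J)$, and only $a<b$ is used. The genuinely $b<2a$-dependent subcase is $J=\emptyset$, which you lump with ``$J$ monochromatic in a block other than $\pi_s$'' and claim is ``dispatched by removal, which already makes $J\cup i$ polychromatic.'' For $J=\emptyset$ this is false: $J\cup i=\{i\}$ is a singleton, hence monochromatic, so $v(J\cup i)=a$, removal gives $v(I\setminus i)+v(\{i\})=2a$, there is no swap since $J\setminus I=\emptyset$, and the inequality $v(I)+v(\emptyset)=b\le 2a$ holds exactly because $b<2a$. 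Your intermediate claim would fail; the conclusion survives, but for a different reason than you state, and you should correct both the $J=\emptyset$ justification and your attribution of where $b<2a$ enters. The rest of the case analysis (including the polychromatic-$J$ counting argument) is sound.
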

\begin{proof}
The author is grateful for the following simple proof due to an anonymous referee. We shall prove that $v^{\pi,a,b} = \partial^{[n]}\bar{v}$ where $\bar{v}$ is the maximum bipartite valuation on a ground set of cardinality $n+m$, defined via Figure \ref{fig:oxs}.
\begin{figure}[h]
\includegraphics[width=0.6\textwidth]{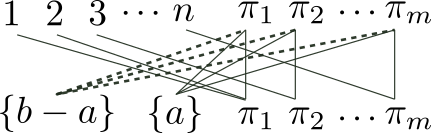}
\caption{The bipartite graph corresponds to $\bar{v}$. The top nodes of this graph form the ground set for $\bar{v}$, indexed by $[n]$ and the $m$ parts of the partition $\pi$. The bottom nodes of this graph are indexed by the $m$ parts of the partition $\pi$, together with two special nodes with labels $\{a\}$ and $\{b-a\}$, respectively. Solid edges all have weights $a$ and either connect $\pi_i$ (top) and $\pi_i$ (bottom), $\pi_i$ (top) and $\{a\}$ (bottom), or connect $\pi_i$ (bottom) to its corresponding elements in $[n]$ (top). The dashed edges all have weights $b-a$ and connect $\{b-a\}$ (bottom) to $\pi_i$ (top).}\label{fig:oxs} 
\end{figure}
Under the condition $0 < a < b < 2a$, one finds that
\begin{align*}
v(\pi) &= ma\\
v(\pi \cup I) &= 
\left\{ 
\begin{array}{ccc}
(m+1)a & \mbox{ if } & I \subseteq \pi_r \mbox{ for some } r \in \{1,\dots,m\} \\
ma + b & \mbox{ else.}  &
\end{array}\right. \, \forall \, I \subseteq [n].
\end{align*}
Therefore, $v^{\pi,a,b} = \partial^{[n]}v$. Since $v$ is OXS, $v^{\pi,a,b} \in \GS_n$. 
\end{proof}

By the proof of Lemma \ref{lem:in.gs}, a partition $\pi$ of $[n]$ into $r$ parts gives rise to a partition valuation $v^\pi: 2^{[n]} \to \R_{\geq 0}$ that is the endowment by $[n]$ of an OXS valuation in the larger set $[n+r]$, that is, $v^\pi$ is in the EAV class on ground set $[n+r]$. However, when the ground set is $[n]$, most partition valuations cannot be generated from merging and endowment of functions in $\GS_n$. 

\begin{lemma}\label{lem:clique.irr}
Let $n \geq 4$, $0 < a < b < 2a$. Suppose $\pi$ is a set partition of $[n]$ into $m$ parts such that each part has cardinality at least 2. Then $v^{\mathcal{S},a,b}$ is irreducible. In particular, it is a minimal generator for $\GS_n$ under merging and endowment.
\end{lemma}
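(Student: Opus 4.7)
The plan is to suppose $v^{\pi,a,b} = u^1 \ast u^2$ with $u^1, u^2 \in \GS_n$ and derive that one factor is identically zero while the other equals $v^{\pi,a,b}$. I use only that gross substitutes valuations are nonnegative, monotone, submodular, and vanish at $\emptyset$, and I assume without loss of generality that both factors have ground set $[n]$ (a factor with smaller ground set is handled by the same case analysis, since missing elements force the empty split on that side).

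First I would examine singletons: evaluating the merge at $\{i\}$ gives $a = v(\{i\}) = \max(u^1(\{i\}), u^2(\{i\}))$, while the pointwise bound $u^k(\{i\}) \leq v(\{i\}) = a$ shows each $u^k(\{i\}) \in [0,a]$. Hence for every $i \in [n]$ at least one factor attains $a$ on the singleton $\{i\}$. Next I would propagate this within each part $\pi_r$ of size at least two. For any distinct $i, j \in \pi_r$, the equality $v(\{i, j\}) = a$ forces both $u^1(\{i\}) + u^2(\{j\}) \leq a$ and $u^1(\{j\}) + u^2(\{i\}) \leq a$ via the splits $T = \{i\}$ and $T = \{j\}$. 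If $u^1(\{i\}) = a$, nonnegativity of $u^2$ yields $u^2(\{j\}) = 0$, and the singleton step then gives $u^1(\{j\}) = a$, and the symmetric inequality forces $u^2(\{i\}) = 0$. Thus each part of size $\geq 2$ is uniformly either \emph{type 1} ($u^1 = a$ and $u^2 = 0$ on all its singletons) or \emph{type 2} (with the roles of $u^1, u^2$ swapped); the hypothesis that every part has size $\geq 2$ is exactly what lets me assign a type to every element.

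The crux is then to rule out mixed partitions. Suppose $\pi_r$ is type 1 and $\pi_s$ is type 2; pick $i \in \pi_r, j \in \pi_s$. Because $i, j$ lie in different parts, $v(\{i, j\}) = b$, yet the split $T = \{i\}$ of $\{i, j\}$ contributes $u^1(\{i\}) + u^2(\{j\}) = 2a > b$ to the merge, contradicting $u^1 \ast u^2(\{i, j\}) = b$. Hence all parts share the same type, WLOG type 1, so $u^2(\{i\}) = 0$ for every $i \in [n]$; by iterated submodularity together with nonnegativity, $u^2 \equiv 0$, and consequently $v^{\pi,a,b} = u^1$. I expect the mixed-types step to be the decisive one: it is precisely where the strict inequalities $0 < a$ (so that an $a$-valued singleton is informative) and $b < 2a$ (so that the cross-part split actually overshoots) enter, confirming that the hypothesis $0 < a < b < 2a$ is used in an essential way.
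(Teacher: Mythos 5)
Your proof is correct and takes essentially the same approach as the paper: it works from singleton and pair evaluations of the merge, uses the bound $b < 2a$, and finishes by forcing one factor to vanish via submodularity. The only difference is organizational — the paper derives $u_i = a$ for all $i$ at once from $b \geq v(\{1,i\}) \geq a + u'_i$ and then kills $u'$ on singletons using same-part pairs, whereas you route through a part-by-part type assignment and rule out mixed types with a cross-part pair; the underlying inequalities and conclusion are the same.
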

\begin{proof}
For a function $u: 2^{[n]} \to \R$, write $u_i$ for $u(\{i\})$. Suppose that $v = u \ast u'$ for some $u,u' \in \GS_n$. Assume without loss of generality that $v_1 = u_1 = a$. For any $i \neq 1$, 
$$ b \geq v(\{1,i\}) \geq u_i +  u'_i = a + u'_i, $$
so $u'_i \leq b-a < a$. But 
$$ v_i = \max(u_i, u'_i)) = a, $$
therefore $u_i = a$ for all $i \in [n]$. By the assumption on $\pi$, for each $i \in [n]$, there exists some $j \in [n]$ such that $i$ and $j$ belong to the same partition, so
$$ v(\{i,j\}) = a \geq u_j+ u'_i = a + u'_i. $$
So $u'_i = 0$ for all $i \in [n]$. Now, fix any subset $I \subset [n]$ and $j \notin I$. Since $u' \in \GS_n$, 
$$ u'(I \cup \{j\}) \leq u'(I) + u'_j = u'(I). $$
But $u'$ is a valuation, so $u'(I \cup \{j\}) \geq u'(I)$. Thus $u'(I \cup \{j\}) = u'(I)$ for all $I \subset [n]$, so in particular $u'(I \cup \{j\}) = u'(\emptyset) = 0$. That is, $u'$ is the zero function. So $v = u$. Thus $v$ is irreducible. Since $v$ is defined on ground set $[n]$, it is in $\mathcal{G}_n$ by Corollary \ref{rem:irr}.
\end{proof}

\begin{lemma}\label{lem:notin.mbv}
Let $n \geq 4$, $0 < a < b < 2a$. Suppose $\pi$ is a set partition of $[n]$ into $m$ parts such that each part has cardinality at least 2. Then $v^{\pi,a,b}$ is not the weighted rank of some matroid $\E$ on $[n]$. 
\end{lemma}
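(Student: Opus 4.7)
The plan is a direct proof by contradiction using only the values of $v^{\pi,a,b}$ on singletons and two-element sets, together with the strict squeeze $a < b < 2a$. Assume that $v = v^{\pi,a,b}$ equals $\rho^w$ for some matroid $\mathcal{E} = ([n],\mathcal{I})$ and nonnegative weights $w \in \R_{\geq 0}^n$; the goal is to read off enough structural constraints to force an impossibility.

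The first step pins down $w$ and the loops of $\mathcal{E}$. For every $i \in [n]$ one has $v(\{i\}) = a > 0$, and by \eqref{eqn:weighted.matroid.val} the weighted rank on a singleton is either $0$ (if $\{i\}$ is a loop) or $w_i$ (if $\{i\}$ is independent). Hence $\{i\} \in \mathcal{I}$ and $w_i = a$ for all $i$.

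The second step extracts the contradiction. Choose indices $i, j \in [n]$ lying in distinct parts of $\pi$; such a pair exists as soon as $m \geq 2$ (the case $m = 1$ being excluded in substance, since then $v^{\pi,a,b}$ is literally the weighted rank of $U_{1,n}$ with constant weight $a$ and the lemma would fail). On the one hand $v(\{i,j\}) = b$; on the other, the only possible values of $\rho^w(\{i,j\})$ are $w_i + w_j = 2a$ (if $\{i,j\} \in \mathcal{I}$) and $\max(w_i, w_j) = a$ (if $\{i,j\} \notin \mathcal{I}$). Neither equals $b$, because $a < b < 2a$, and the contradiction is immediate.

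I do not anticipate any real obstacle. The mechanism is that positivity of $v$ on singletons rigidly forces both the weights and the independence of every singleton, leaving the matroid only a binary choice on each pair (independent or not); the strict gap $a < b < 2a$ then rules out both options simultaneously for any cross-part pair. It is worth noting the parallel with Lemma~\ref{lem:in.gs}, where the very same inequality $a < b < 2a$ is what places $v^{\pi,a,b}$ inside $\GS_n$: the gap plays a dual role, pushing partition valuations into $\GS_n$ while expelling them from the cone of weighted matroid ranks on the ground set $[n]$.
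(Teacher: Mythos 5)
Your proof is correct and follows essentially the same route as the paper: both pin down $w_i = a$ and looplessness of $\mathcal{E}$ from the positive singleton values, and then use the squeeze $a < b < 2a$ to rule out both possible values ($a$ if $\{i,j\}\notin\mathcal{I}$, $2a$ if $\{i,j\}\in\mathcal{I}$) of $\rho^w$ on a pair drawn from distinct parts---the paper merely reorganizes this slightly by first identifying $\mathcal{E}$ as $U_{1,[n]}$ before exhibiting the disagreement. Your parenthetical remark about $m=1$ is also a legitimate catch: the stated hypothesis that each part has cardinality at least $2$ does not by itself force $m\geq 2$, and for $m=1$ the valuation $v^{\pi,a,b}$ genuinely is the weighted rank of $U_{1,[n]}$, so both the lemma statement and the paper's own proof (``by the assumption on $\pi$, there exists some $i,j$ such that $v(\{i,j\})=b$'') tacitly assume $m\geq 2$.
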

\begin{proof}
Write $v$ for $v^{\pi,a,b}$. Suppose for contradiction that $v = \rho^w$ for some matroid $\E$ on $[n]$. For any pair $i,j \in [n]$, 
$$v(\{i,j\}) \leq b < 2a =  v_i + v_j.$$
Thus $\E$ is a matroid of rank one. Since $v_i = a = w_i> 0$ for all $i$, this matroid is loopless. Therefore, $\E$ must be the uniform matroid $U_{1,[n]}$. But by the assumption on $\pi$, there exists some $i,j \in [n]$ such that $v(\{i,j\}) = b \neq \max(w_i,w_j) = a$. So $v \neq \rho^w$, and this is the desired contradiction.
\end{proof}

\begin{proof}[Proof of Theorem \ref{thm:main} for $n \geq 4$]
For each pair of real numbers $a,b \in \R$ such that $0 < a < b < 2a$, and for $\pi = \{\{1,2\},\{3,4,\dots,n\}\}$, consider the partition valuation $v^{\pi,a,b}$. By Lemma \ref{lem:clique.irr}, it is in $\mathcal{G}_n$, the set of minimal generators of $\GS_n$ under merging and endowment. By Lemma~\ref{lem:notin.mbv}, it is not a weighted matroid rank function. Thus the set of minimal generators for $\GS_n$ is strictly larger than the set of generators of $\MBV_{n,n}$. So $\MBV_{n,n} \subsetneq \GS_n$.
\end{proof}

\section{Proof of Theorem \ref{thm:ce.irreducible}}

\subsection{A recipe for minimal generators}\label{subsec:geometry}
This section contains two technical results which are critical to the proofs of Theorems \ref{thm:ce.irreducible} and \ref{thm:main2} and yield important insights on the geometry of merging. Their proofs rely on tools from discrete convex analysis, and are presented in the appendix. The first result, Lemma \ref{lem:gs.mixed}, gives a sufficient condition for a valuation $u \in \GS_n$ to be irreducible.  

\begin{definition}[$M$-irreducible polytopes]
Let $P \subseteq [0,1]^n$ be an $M^\natural$- convex lattice polytope. Say that $P$ is $M$-irreducible if 
$$ P = (P^1+P^2) \cap [0,1]^n $$
for $M^\natural$ lattice polytopes $P^1,P^2 \subseteq [0,1]^n$ implies that either $P = P^1$ or $P = P^2$.
\end{definition}

\begin{lemma}[Geometry of merging]\label{lem:gs.mixed}
Let $u,v \in \GS_n$. For each face $F$ of $\Delta_{u \ast v}$, there exist some faces $F_u$ of $\Delta_u$ and $F_v$ of $\Delta_v$ such that
\begin{equation}\label{eqn:F.p}
F = (F_u + F_v) \cap [0,1]^n.
\end{equation}
In particular, if $\Delta_{u}$ has a full-dimensional face $F$ that is $M$-irreducible, then $u$ is in $\mathcal{G}_n$. 
\end{lemma}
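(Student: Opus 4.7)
The plan is to prove the Minkowski-sum decomposition via Legendre--Fenchel duality, and then use $M$-irreducibility of the full-dimensional face $F$ to force irreducibility of $u$ under merging.

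For the main identity, my starting point is the duality $(u \ast v)^\vee(p) = u^\vee(p) + v^\vee(p)$ for every $p \in \R^n$. For gross substitutes valuations this holds on the nose: unfolding the definition of merge and using that disjoint optimizers $t \in D_u(p), s \in D_v(p)$ always exist (a consequence of $M^\natural$-concavity), one obtains
\begin{equation*}
D_{u \ast v}(p) = \{t + s : t \in D_u(p),\, s \in D_v(p),\, t + s \in \{0,1\}^n\}.
\end{equation*}
Passing to convex hulls with $F_u = \conv D_u(p)$ and $F_v = \conv D_v(p)$, the inclusion $F \subseteq (F_u + F_v) \cap [0,1]^n$ is immediate. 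For the reverse inclusion I would invoke Theorem \ref{thm:murota} to conclude $F_u, F_v$ are $M^\natural$ polytopes, together with the discrete convex analysis facts that $M^\natural$ polytopes are closed under Minkowski sum and under intersection with the unit box, and satisfy the integer decomposition property. These imply that every lattice point of $(F_u + F_v) \cap [0,1]^n$ has the form $t + s$ with $t \in F_u \cap \{0,1\}^n$, $s \in F_v \cap \{0,1\}^n$, $t+s \in \{0,1\}^n$; since an $M^\natural$ polytope is the convex hull of its lattice points, the reverse inclusion follows.

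For the \emph{in particular} statement, by Corollary \ref{rem:irr} it suffices to show $u$ is irreducible under merging, since the hypothesis that $\Delta_u$ has a full-dimensional face forces $u$ to have ground set $[n]$. Suppose $u = u_1 \ast u_2$ for $u_1, u_2 \in \GS_n$. Applying the first part to the assumed $M$-irreducible full-dimensional face $F$, supported by some (unique, since $F$ is top-dimensional) $p^*$, one gets $F = (F_1 + F_2) \cap [0,1]^n$ with $F_i \subseteq [0,1]^n$ an $M^\natural$ face of $\Delta_{u_i}$. By $M$-irreducibility of $F$ we may take $F = F_1$, whence $(F + F_2) \cap [0,1]^n = F$. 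My plan is to argue that for a full-dimensional $M$-irreducible $F$ the only $M^\natural$ polytope $F_2 \subseteq [0,1]^n$ satisfying this is $F_2 = \{0\}$: any lattice point $s \neq 0$ in $F_2$ would, via the integer-point analysis of the first part, produce a vertex $v$ of $F$ with $v + s \in [0,1]^n \setminus F$, contradicting $(F + s) \cap [0,1]^n \subseteq F$. From $F_2 = \{0\}$, i.e.\ $D_{u_2}(p^*) = \{0\}$, one then propagates to $u_2 \equiv 0$ by sweeping across adjacent cells of $\Delta_u$ and using the first part at each to conclude that the face of $\Delta_{u_2}$ at each nearby price is also trivial; monotonicity and $M^\natural$-concavity of $u_2$ then pin $u_2$ down globally. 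This gives $u = u_1$, so $u$ is irreducible.

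The principal obstacle I anticipate is the last step, namely promoting the local triviality $D_{u_2}(p^*) = \{0\}$ at a single price to the global identity $u_2 \equiv 0$. Since $p^*$ is isolated in the normal fan of $\Delta_u$, the argument cannot proceed by perturbing $p^*$ alone; instead it must exploit the mixed-subdivision structure that expresses every face of $\Delta_u$ as a Minkowski sum of faces of $\Delta_{u_1}$ and $\Delta_{u_2}$, together with connectivity of the regular subdivision. Executing this propagation carefully is where the discrete convex analysis machinery flagged for the appendix will do the real work.
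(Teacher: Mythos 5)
Your starting identity $(u \ast v)^\vee(p) = u^\vee(p) + v^\vee(p)$ is false, and the claimed fact that disjoint optimizers $t \in D_u(p),\, s \in D_v(p)$ always exist at the \emph{same} price $p$ fails as well. Take $n=2$, let $u=v$ be the rank of the free matroid $U_{2,2}$ (so $u(S)=|S|$), and take $p=0$: then $D_u(0)=D_v(0)=\{\{1,2\}\}$, there is no disjoint pair, and $(u\ast v)^\vee(0)=2$ while $u^\vee(0)+v^\vee(0)=4$. The discrepancy is exactly what the cube constraint introduces: the merge is the supremum convolution over $\Z^n$ restricted to $\{0,1\}^n$, that is $-(u\ast v)=(-u)\infconv(-v)+\delta_{[0,1]^n}$, and the additional $\delta_{[0,1]^n}$ term ruins the na\"ive ``conjugates add'' duality. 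As a consequence, the face $F$ supported by $q$ in $\Delta_{u\ast v}$ need not decompose using the faces of $\Delta_u$ and $\Delta_v$ supported by that same $q$. This is precisely the obstruction that the appendix argument overcomes: applying the $M$-convex intersection theorem to $f_1=(-u)\infconv(-v)[-q]$ and $f_2=\delta_{[0,1]^n}$ produces an auxiliary vector $p^\ast$, and the faces $F_u,F_v$ in \eqref{eqn:F.p} are the cells of $\Delta_u,\Delta_v$ supported at the \emph{shifted} price $q+p^\ast$, not at $q$. Without that shift, your Minkowski-sum decomposition cannot be obtained, and the same issue undermines your ``mixed subdivision'' picture in the last paragraph: because of $\delta_{[0,1]^n}$, $\Delta_{u\ast v}$ is not the Cayley-trick mixed subdivision of $\Delta_u$ and $\Delta_v$, and the required shift $p^\ast$ changes from cell to cell.

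On the ``in particular'' part: the obstacle you flag --- propagating $D_{u_2}(p^\ast)=\{0\}$ to $u_2\equiv 0$ --- is real, but it is also unnecessary. In the applications where the lemma is invoked (the proof of Theorem \ref{thm:ce.irreducible} and of Proposition \ref{prop:modify}), the conclusion $u=u_1$ is reached without ever showing $u_2\equiv 0$: from $F=F_1$ one gets $u(S)=u_1(S)$ on the lattice points of $F$, and the one-sided bound $u_1\le u_1\ast u_2 = u$ together with monotonicity of $u_1$ (and the defining structure of $u$ on the remaining sets) forces equality everywhere. Restructuring your argument along those lines would avoid the global-propagation step you anticipate being problematic.
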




\begin{remark}\label{rem:recipe}
Lemma \ref{lem:gs.mixed} gives the following method for constructing minimal generators of $\GS_n$. First, construct a full-dimensional $M$-irreducible polytope $P$. Second, embed this polytope $P$ into a suitable regular subdivision $\Delta$. Such a subdivision must satisfy three criterion: (a) $\Delta$ has $P$ as a face, (b) $\Delta = \Delta_u$ for some valuation $u: 2^{[n]} \to \R_{\geq 0}$, and (c) the edges of $\Delta$ are parallel to one of the vectors in $\{e_i-e_j,e_i\}$. By Theorem \ref{thm:murota}, the edge condition guarantees that $u \in \GS_n$. And since $\Delta_u$ has $P$, by Lemma \ref{lem:gs.mixed}, $u \in \mathcal{G}_n$, that is, it is a minimal generator of $\GS_n$ under merging and endowment.
\end{remark}
To carry out this program, we need a way to construct full-dimensional $M$-irreducible polytopes. The second main result in this section, Proposition \ref{prop:mn.irr}, precisely provides such a recipe. The proof is given in the Appendix. 

For $P \subset \R^n$ an $M^\natural$ lattice polytope, define $\rho_P: 2^{[n]} \to \R_{\geq 0}$ given by
\begin{equation}\label{eqn:rho.P}
\rho_P(I) = \max\{\sum_{i \in I}x_i: x \in P\}.
\end{equation}
\begin{proposition}\label{prop:mn.irr}
Let $P \subset [0,1]^n$ be an $M^\natural$ lattice polytope. If $\rho_P$ defined via \eqref{eqn:rho.P} is the rank function of an irreducible matroid, then $P$ is $M$-irreducible. 
\end{proposition}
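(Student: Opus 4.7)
The plan is to translate the Minkowski decomposition of $P$ into a matroid-union decomposition, invoke the irreducibility hypothesis, and upgrade the resulting rank equality to polytope equality.

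Given a decomposition $P = (P^1 + P^2) \cap [0,1]^n$ with $P^1, P^2 \subseteq [0,1]^n$ both $M^\natural$ lattice polytopes, set $\rho_i := \rho_{P^i}$. Because $P^i$ is an $M^\natural$ lattice polytope in $[0,1]^n$, $\rho_i$ is monotone, submodular and unit-incrementing, hence the rank function of a matroid $\mathcal{M}^i$ on $[n]$. I would then show $\rho_P = \rho_1 \ast \rho_2$. The $\leq$ direction follows from the estimate $\sum_{j \in I} x_j \leq \rho_1(T) + \rho_2(T) + |I \setminus T|$ for $T \subseteq I$ and $x = y + z \in P$ with $y \in P^1, z \in P^2$ (using $y_j + z_j \leq 1$ on $I \setminus T$ and optimality of $y, z$ on $T$); minimizing over $T$ matches Edmonds' formula for $\rho_{\mathcal{M}^1 \vee \mathcal{M}^2}(I)$. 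The reverse inequality follows from an $M^\natural$ exchange argument: the maximizing independent sets $I_1$ of $\mathcal{M}^1$ and $I_2$ of $\mathcal{M}^2$ in Edmonds' formula can be chosen with disjoint supports, and the indicator of $I_1 \cup I_2$ is then a lattice point of $P$ achieving the desired value.

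Let $\mathcal{M}$ be the matroid with rank function $\rho_P$. Combining the above steps identifies $\mathcal{M} = \mathcal{M}^1 \vee \mathcal{M}^2$, and by the irreducibility hypothesis one of $\mathcal{M}^1, \mathcal{M}^2$ equals $\mathcal{M}$; say $\mathcal{M}^1 = \mathcal{M}$, so $\rho_{P^1} = \rho_P$.

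Upgrading $\rho_{P^1} = \rho_P$ to $P^1 = P$ is the main obstacle, because a rank function does not generally determine its $M^\natural$ lattice polytope — both $\conv\{e_1, e_2\}$ and $\conv\{0, e_1, e_2\}$ are $M^\natural$ and share the rank function of $U_{1,2}$. The argument must therefore exploit the Minkowski relation $P = (P^1 + P^2) \cap [0,1]^n$ directly, not just the rank identity. When $0 \in P$, the relation $0 = y + z$ with $y, z \geq 0$ forces $0 \in P^1 \cap P^2$, and a standard $M^\natural$ exchange argument then shows both $P$ and $P^1$ are downward-closed at the lattice-point level, so each coincides with the independence polytope of $\mathcal{M}$, yielding $P^1 = P$. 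The case $0 \notin P$ is reduced to the previous one by translating or reflecting at a common vertex of $P, P^1, P^2$ while preserving the $M^\natural$ structure and the decomposition relation; I expect this reduction to be where the proof requires the most care.
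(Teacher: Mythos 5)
Your overall route is the same as the paper's: turn the Minkowski decomposition $P=(P^1+P^2)\cap[0,1]^n$ into a matroid-union decomposition of the matroid $\E$ with rank $\rho_P$, and then invoke the irreducibility hypothesis. The difference is in tooling: the paper conjugates the identity $\delta_P = (\delta_{P^1} \infconv \delta_{P^2}) + \delta_{[0,1]^n}$ via Murota's integer Fenchel duality to land directly on Edmonds' rank formula $\rho_P(I) = \min_{J\subseteq I}(\rho_{P^1}(J)+\rho_{P^2}(J)+|I\setminus J|)$, whereas you prove this formula by hand (the $\leq$ bound via a pointwise estimate, the $\geq$ bound via an exchange argument). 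Your elementary derivation is fine in spirit, but be aware that the $\geq$ half needs $e_{I_1}\in P^1$ and $e_{I_2}\in P^2$, which you only get from downward-closure of $P^1,P^2$; that in turn requires $0\in P^1\cap P^2$, so your Step~1 is already silently assuming $0\in P$. The paper's Fenchel-duality derivation of the rank identity does not have this restriction.

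The genuinely valuable part of your proposal is that you noticed the last step is not automatic. The paper derives $\E=\E^1\vee\E^2$ and declares this ``a contradiction on the irreducibility of $\E$,'' but irreducibility only forbids a decomposition with $\E^1\neq\E$ and $\E^2\neq\E$, and, as your $U_{1,2}$ example correctly shows, $P^1\neq P$ does not by itself give $\E^1\neq\E$. So the paper's proof also needs the bridge you identify: from a nontrivial polytope decomposition to a nontrivial matroid decomposition. Your argument for $0\in P$ closes that bridge cleanly: the decomposition forces $0\in P^1\cap P^2$, the $M^\natural$ exchange axiom then makes $P,P^1,P^2$ downward closed at the lattice level, hence each is the independence polytope of its own rank matroid, hence $\E^1=\E$ would force $P^1=P$. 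This covers the application in the proof of Theorem~\ref{thm:ce.irreducible}.

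Where your proposal does not close the gap is the case $0\notin P$, and this case is not hypothetical: the proof of Proposition~\ref{prop:modify} applies Proposition~\ref{prop:mn.irr} to $Q=\conv\{e_I: I\in\mathcal{I}, I\neq\emptyset\}$, which excludes the origin by design. Your suggested reduction by ``translating or reflecting at a common vertex'' does not obviously work: translation by a lattice vector $t$ replaces the constraint cube by $[0,1]^n-t$, so the decomposition $P-t=((P^1-t_1)+(P^2-t_2))\cap([0,1]^n-t)$ is no longer of the required form; and reflection $x\mapsto e_{[n]}-x$ does not interact well with Minkowski sums, since $e_{[n]}-(P^1+P^2)=(e_{[n]}-P^1)+(-P^2)$ and $-P^2\not\subseteq[0,1]^n$. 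What one can say for this specific $Q$ is that $0\notin Q$ forces exactly one of $P^1,P^2$ (say $P^2$) to contain the origin, whence $P^1\subseteq Q$; one then still needs an argument that $\rho_{P^1}=\rho_Q$ with $P^1\subseteq Q$ and both sharing the ``floor'' $\min\sum_i x_i = 1$ forces $P^1=Q$. That step is where the real work lies, and it is not supplied by your translation/reflection idea (nor, as written, by the paper).

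In summary: your approach matches the paper's in outline, is somewhat more elementary in the rank-identity step, and correctly flags a subtlety (rank does not determine the polytope) that the paper's own proof skips over. You resolve it when $0\in P$; the $0\notin P$ case, which the paper actually needs, remains open in your write-up, and the proposed reduction as stated would fail.
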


\subsection{Proof of Theorem \ref{thm:ce.irreducible}}

\begin{proof}[Proof of Theorem \ref{thm:ce.irreducible}]
Suppose that $\mathcal{E} = ([n],\mathcal{I})$ is irreducible. Without loss of generality, assume that $\E$ has no loop. Let $P$ be the independence polytope of $\mathcal{E}$
$$ P = \conv\{e_I: I \in \mathcal{I}\}. $$
Since $\mathcal{E}$ has no loop, $P$ is full-dimensional. Note that $P$ is a face of $\Delta_{\rho^w}$, namely, it is the convex hull of $a \in \{0,1\}^n$ that achieves the maximum in $f_{\rho^w}(w)$ (cf. \eqref{eqn:f}). Furthermore, $\rho_P$ equals the rank function of $\E$. Since $\E$ is irreducible, by Proposition \ref{prop:mn.irr}, $P$ is $M$-irreducible. Now suppose that $\rho^w = u^1 \ast u^2$ for some $u^1,u^2 \in \GS_n$. By Lemma \ref{lem:gs.mixed}, 
$$P = (P^1 + P^2) \cap [0,1]^n$$ 
for some faces $P^i \in \Delta_{u^i}$, $i = 1, 2$. Since $P$ is $M$-irreducible, either $P^1$ or $P^2$ must be equal to $P$. Without loss of generality, assume that this is $P^1$. Then $P^2 = \{e_\emptyset\} \in \Delta_{u^2}$. Thus, for all $S \in \mathcal{I}$
 \begin{equation}\label{eqn:rho.I}
\rho^w(S) = \sum_{i \in S}w_i = u^1(S) + v^1(\emptyset) = u^1(S).
\end{equation}
Now consider a set $S \subseteq [n]$ where $S \notin \mathcal{I}$. Since $u^2$ is a valuation, $u^1 \leq u^1 \ast u^2 = \rho^w$. Then
\begin{align*}
\rho^w(S) &= \max_{I \in \mathcal{I}, I \subset S}\rho^w(I) & \mbox{ by definition} \\
&= \max_{I \in \mathcal{I}, I \subset S}u^1(I) & \mbox{ by \eqref{eqn:rho.I}} \\
&\leq u^1(S) & \mbox{ since $u^1$ is a valuation.}
\end{align*}
Therefore, $\rho^w(S) = u^1(S)$ for all $S \subseteq [n]$, so $\rho^w$ is an irreducible function. For the converse, suppose that $\mathcal{E} = ([n],\mathcal{I})$ is reducible, so $\mathcal{E} = \mathcal{E}^1 \vee \mathcal{E}^2$, for matroids $\mathcal{E}^1$ (resp. $\E^2$) defined on ground set $E^1$ (resp. $E^2$) with independence set $\mathcal{I}^1$ (resp. $\mathcal{I}^2$). 
Let $w^1 \in \R^{E^1}$ be the restriction of $w$ to $E^1$, and $w^2 \in \R^{E^2}$ be the restriction of $w$ to $E^2$. For $S \subseteq [n]$, 
\begin{align*}
\rho^{w^1} \ast \rho^{w^2}(S) &= \max_{T \subseteq S}\rho^{w^1}(T) + \rho^{w^2}(S-T) \\
&= \max_{T \subseteq S}\rho^{w^1}(T) + \rho^{w^2}(S-T) \\
&= \max_{T \subseteq S}\left(\max_{T^1 \in \mathcal{I}^1, T^1 \subseteq T}\sum_{i \in T^1}w_i + \max_{T^2 \in \mathcal{I}^2, S^2 \subseteq S-T}\sum_{i \in S^2}w_i\right) \\
&= \max_{T \subseteq S}\left(\max_{T^1 \in \mathcal{I}^1, T^1 \subseteq T}\max_{T^2 \in \mathcal{I}^2, S^2 \subseteq S-T}\sum_{i \in T^1 \cup S^2}w_i\right) \\
&= \max_{T' \subseteq S, T' \in \mathcal{I}} \sum_{i \in T'}w_i \hspace{1cm} \mbox{ by definition of matroid union} \\
&= \rho^w(S).
\end{align*}
Thus $\rho^w = \rho^{w^1} \ast \rho^{w^2}.$ Since $\mathcal{E}^1 \neq \mathcal{E}^2$, $\rho^{w^1}, \rho^{w^2} \neq \rho^w$, and thus $\rho^w$ is a reducible function, as needed.
\end{proof}

\section{Proof of Theorem \ref{thm:main2}}\label{sec:main2}

In this section, we carry out the program outlined in Remark \ref{rem:recipe} to construct another large family of irreducibles for $n \geq 6$ that cannot be obtained as partition valuations nor weighted matroid ranks, thereby proves Theorem \ref{thm:main2}. Our strategy is illustrated in Figure \ref{fig:illustrate}. 

\begin{figure}[h!]
\includegraphics[width=0.4\textwidth]{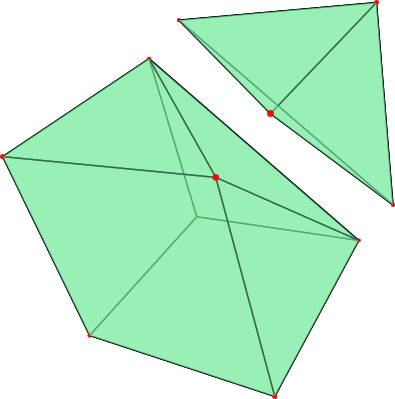} \quad 
\includegraphics[width=0.4\textwidth]{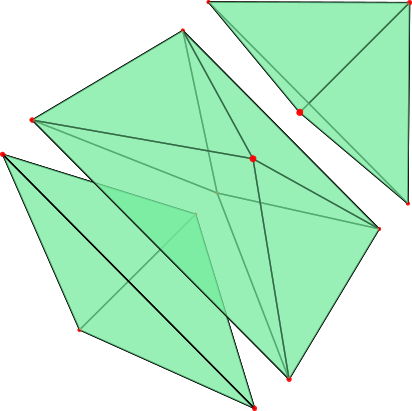}
\caption{Proof idea of Theorem \ref{thm:main2}. Figure not to scale and is for illustration purpose only. The LHS depicts the regular subdivision $\Delta_{\rho^w}$ for the weighted rank of an irreducible matroid. We modify $\rho^w$ to split the origin (bottom left) away while leaving the other faces of the regular subdivision the same. This gives a new regular subdivision $\Delta_v$ (RHS), where the independence polytope $P$ is split into two $M^\natural$ lattice polytopes: the standard simplex on $[n]$, and the complement $Q$. By Proposition \ref{prop:mn.irr}, $Q$ is $M$-irreducible, and this face witnesses the irreducibility of $v$ in $\GS_n$ by Lemma \ref{lem:gs.mixed}.}\label{fig:illustrate}
\end{figure}

Let $\rho^w$ be the weighted matroid rank of a loopless irreducible matroid $\E$ of rank at least two. Let $\mathbf{1}: \{0,1\}^n \to \{1\}$ be the all-one function, and $\mathbf{1}_{\emptyset}: \{0,1\}^n \to \{0,1\}$ be the indicator of the origin, that is
$$ \mathbf{1}_{\emptyset}(\emptyset) = 1, \quad \mathbf{1}_{\emptyset}(I) = 0 \mbox{ for all } I \subseteq [n], I \neq \emptyset. $$
\begin{proposition}\label{prop:modify}
For any $w \in \R^n_{>0}$ that is not a constant multiple of the all-one vector, and any $c > 0$, the valuation
\begin{equation}\label{eqn:v.rho} 
v := \rho^w + c \cdot (\mathbf{1}-\mathbf{1}_{\emptyset})
\end{equation}
satisfies the following
\begin{enumerate}
  \item[(i)] $v \in \GS_n$ and $v$ is irreducible
  \item[(ii)] $v \notin \MBV_{n,n}$ and $v$ is not a partition valuation.
\end{enumerate}
\end{proposition}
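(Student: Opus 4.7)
The plan is to carry out the program of Remark~\ref{rem:recipe} with the candidate face $Q := \conv\{e_I : I \in \mathcal{I} \setminus \{\emptyset\}\}$, where $\mathcal{I}$ denotes the family of independent sets of $\E$. Geometrically, $v$ is obtained from $\rho^w$ by keeping $v(\emptyset) = 0$ while raising every other lifted point by $c$, which has the single effect of ``peeling'' the origin off the face $P := \conv\{e_I : I \in \mathcal{I}\}$ of $\Delta_{\rho^w}$. First I would describe $\Delta_v$ explicitly: it agrees with $\Delta_{\rho^w}$ outside $P$ and splits $P$ into two full-dimensional faces, the standard simplex $\Sigma := \conv\{0, e_1, \dots, e_n\} = P \cap \{x : \sum_i x_i \leq 1\}$ and $Q = P \cap \{x : \sum_i x_i \geq 1\}$, glued along $\conv\{e_1, \dots, e_n\}$. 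This is verified by exhibiting supporting price vectors for $v$: at $p = w + c\mathbf{1}$ the demand set is $\{0, e_1, \dots, e_n\}$ (giving $\Sigma$), while at $p = w$ the demand set is $\{e_I : I \in \mathcal{I}\setminus\{\emptyset\}\}$ (giving $Q$).

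For part (i), I would first verify $v \in \GS_n$ via Theorem~\ref{thm:murota}, checking that every face of $\Delta_v$ is $M^\natural$. Faces preserved from $\Delta_{\rho^w}$ are $M^\natural$ because $\rho^w \in \GS_n$; $\Sigma$ and $\conv\{e_1, \dots, e_n\}$ have edges only in directions $e_i$ or $e_i - e_j$; and the edges of $Q$ are either edges of $P$ between non-origin vertices (hence $M^\natural$ since $P$ is) or ``newly exposed'' segments $[e_i, e_j]$ of direction $e_i - e_j$ arising from the $2$-face of $P$ on coordinates $\{i,j\}$, which is the triangle $\{0, e_i, e_j\}$ when $\{i,j\} \notin \mathcal{I}$ and the parallelogram $\{0, e_i, e_j, e_{\{i,j\}}\}$ when $\{i,j\} \in \mathcal{I}$. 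With $v \in \GS_n$ in hand, irreducibility follows from the recipe: the function $\rho_Q$ coincides with the rank function of $\E$ (for any non-empty $I$, a basis $J \subseteq I$ of $\E$ is non-empty by looplessness and gives $e_J \in Q$ achieving the max), so the irreducibility of $\E$ together with Proposition~\ref{prop:mn.irr} yields that $Q$ is $M$-irreducible. Because $\E$ is loopless of rank $\geq 2$, $Q$ contains $e_1, \dots, e_n$ together with some $e_{\{i,j\}}$, hence is full-dimensional, and Lemma~\ref{lem:gs.mixed} concludes that $v \in \mathcal{G}_n$.

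For part (ii), the claim that $v$ is not a partition valuation is immediate: every partition valuation takes the common value $a$ on all singletons, whereas $v(\{i\}) = w_i + c$ is non-constant in $i$ by the hypothesis that $w$ is not a multiple of $\mathbf{1}$. For $v \notin \MBV_{n,n}$: any endowment $\partial^T u$ at the root of an $\MBV_{n,n}$-representation of $v$ must have $T = \emptyset$, since otherwise $u$ would require a ground set strictly larger than $[n]$, which is not allowed in $\MBV_{n,n}$; and any merging at the root is trivialized by the irreducibility of $v$ from part (i). Hence $v$ must itself be a weighted matroid rank $\rho^{w'}$ for some matroid $\E' = ([n], \mathcal{I}')$ and weights $w'$. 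Matching singletons forces $\E'$ to be loopless and $w'_i = w_i + c$. Choosing any $\{i,j\} \in \mathcal{I}$ (which exists since $\E$ has rank $\geq 2$), one has $v(\{i,j\}) = w_i + w_j + c$, but $\rho^{w'}(\{i,j\})$ equals either $w_i + w_j + 2c$ (when $\{i,j\} \in \mathcal{I}'$) or $\max(w_i, w_j) + c$ (otherwise), and both contradict $w_i + w_j + c$ given $c > 0$ and $w > 0$.

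The step I expect to be the main obstacle is verifying that the new face $Q$ of $\Delta_v$ is $M^\natural$. Its edges need not all come from edges of $P$: removing the vertex $0$ from a $2$-face of $P$ can expose a new edge of $Q$ (for example, the diagonal $[e_i, e_j]$ of the parallelogram $\{0, e_i, e_j, e_{\{i,j\}}\}$ becomes an edge of $Q$ once the parallelogram collapses to the triangle $\{e_i, e_j, e_{\{i,j\}}\}$). One therefore has to classify the $2$-faces of $P$ containing $0$ and check that in each case the exposed edge still lies in an allowed direction $e_i$ or $e_i - e_j$; looplessness of $\E$ is what makes this classification tractable.
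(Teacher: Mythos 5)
Your proof is correct and follows essentially the same route as the paper: describe $\Delta_v$ as $\Delta_{\rho^w}$ with the independence polytope $P$ split into the standard simplex and $Q = \conv\{e_I: I\in\mathcal{I},\ I\neq\emptyset\}$, verify $v\in\GS_n$ via Theorem~\ref{thm:murota}, apply Proposition~\ref{prop:mn.irr} to get $M$-irreducibility of $Q$ and then Lemma~\ref{lem:gs.mixed} for irreducibility of $v$, and finally invoke Corollary~\ref{rem:irr} to reduce $v\notin\MBV_{n,n}$ to ruling out weighted matroid ranks and partition valuations. The two small variations are both sound and, if anything, sharper than the paper: you actually classify the $2$-faces of $P$ through the origin to confirm the newly exposed edges of $Q$ lie in direction $e_i-e_j$ (the paper asserts this without detail), and your contradiction for the weighted-rank case is a direct case split on whether $\{i,j\}\in\mathcal{I}'$, avoiding the paper's intermediate geometric step that the maximal face of $\Delta_v$ containing the origin forces $\E'$ to be the rank-one uniform matroid.
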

\begin{proof}
By definition, $v(\emptyset) = \rho^w(\emptyset) = 0$, and for non-emptyset $S,T$, $S \subseteq T$ implies
$v(S) = \rho(S) + c \leq \rho(T) + c = v(T)$. Thus $v$ is a valuation. 
To see the difference between $\Delta_{\rho^w}$ and $\Delta_v$, it is easier to consider $\Delta_{v'}$ where $v' := \rho^w - c\cdot\mathbf{1}_\emptyset = v - c\cdot\mathbf{1}$. Since regular subdivisions is invariant under addition by constant, $\Delta_v = \Delta_{v'}$, pointwise. Now, $v'(I) = \rho^w(I)$ for all $I \subseteq [n]$, except for the emptyset, where $v'(\emptyset) = -c < 0$. Therefore, $\Delta_{v'} = \Delta_v$ is obtained from $\Delta_{\rho^w}$ by splitting the origin away from the independence polytope $P = \conv(\{e_I: I \in \mathcal{I}\})$ of $\E$, creating two faces (see Figure \ref{fig:illustrate}). These two faces are the simplex at the origin
$$P_0 = \conv(\{0, e_i: i \in [n]\})$$ 
and its complement
$$ Q = \conv(\{e_I: I \in \mathcal{I}, I \neq \emptyset\}).$$ 
Thus compared to $\Delta_{\rho^w}$, $\Delta_v$ has extra edges of the form $e_i - e_j$. Since $\rho^w \in \GS_n$, by Theorem \ref{thm:murota}, the edges of $\Delta_{\rho^w}$ are parallel to $\{e_i-e_j\}$. Therefore, the edges of $\Delta_v$ are also parallel to $\{e_i-e_j\}$, so by Theorem \ref{thm:murota}, $v \in \GS_n$ and $Q$ is an $M^\natural$ lattice polytope. By definition, $\rho_Q = \rho$. Since $\rho$ is irreducible, by Proposition \ref{prop:mn.irr}, $Q$ is $M$-irreducible. Since $\E$ is loopless and has rank at least 2, $Q$ is full-dimensional. Now, suppose that $v = v^1 \ast v^2$ for some $v^1,v^2 \in \GS_n$. By Lemma \ref{lem:gs.mixed},
$$ Q = (Q^1 + Q^2) \cap [0,1]^n $$
for some faces $Q^1$ of $\Delta_{\omega}$ and $Q^2$ of $\Delta_{\tau}$. Since $Q$ is $M$-irreducible, it is equal to either $Q^1$ or $Q^2$. Suppose without loss of generality that $Q^1 = Q$. Thus $Q^2$ is the origin, and
$$ v(I) = v^1(I) + v^2(\emptyset) = v^1(I) \mbox{ for all } I \in \mathcal{I}, I \neq \emptyset.$$
Since $v$ and $v^1$ are both valuations, $v(\emptyset) = \omega(\emptyset)$, so the above holds for all $I \in \mathcal{I}$. Now let $S \subseteq [n]$. Since $v^1,v^2 \geq 0$,
\begin{align*}
v^1(S) \leq v^1 \ast v^2(S) = v(S) &= \max\{v(I): I \subseteq S, I \in \mathcal{I}\} \\
&= \max\{v^1(I): I \subseteq S, I \in \mathcal{I}\} \leq v^1(S).
\end{align*}
So $v^1(S) = v(S)$ for all $S \subseteq [n]$. Thus $v$ is irreducible. For the second claim, since $w$ has at least two different entries, $v$ have at least $3$ different non-zero values, so it is not a partition valuation.
We now prove that $v \notin \MBV_{n,n}$. Since $v$ is irreducible and has ground set $[n]$, by Corollary \ref{rem:irr}, it is sufficient to show that $v$ is not the weighted rank of some irreducible matroid $\E'$, or some partition valuation. In the first case, suppose for contradiction that $v$ is the weighted rank of some matroid $\E'$. Then the largest face of $\Delta_v$ that contains the origin must equals the independence polytope of $\E'$. But this face is the simplex $P_0$, therefore, $\E'$ is the uniform matroid of rank 1 on $n$. But $\E$ has rank at least two, there exist some independent set $I = \{i,j\}$ of $\E$ of rank two. On this set,
$$ v(ij) = \max(w_i+c,w_j+c)$$
By definition of $v$ in \eqref{eqn:v.rho}, we have
$$ v(ij) = w_i + w_j + c.$$
Since $w \in \R^n_{>0}$, these two quantities cannot be equal, a contradiction. Therefore, $v$ is not a weighted matroid rank, as needed.
\end{proof}

\begin{proof}[Proof of Theorem \ref{thm:main2}]
For $n = 6$, the graphical matroid on the complete graph on four vertices $M(K_4)$ is a loopless, irreducible binary matroid of rank 3 \cite[p405]{oxley2006matroid}. By Lov\'{a}sz and Recski \cite{lovasz1973sum}, a connected binary matroid is irreducible if and only if its one-element deletions are also connected \cite[p405]{oxley2006matroid}. These properties and being loopless are preserved under parallel extensions, therefore, there is at least one loopless irreducible matroid for each $n \geq 6$. For each $n$, let $\E$ be such a matroid. For each weighted rank $\rho^w$ of $\E$, construct $v$ as \eqref{eqn:v.rho}. Proposition \ref{prop:modify} applied to $v$ implies Theorem \ref{thm:main2}.
\end{proof}
This proof requires $n \geq 6$ since it starts with a loopless irreducible matroid of rank at least two, and that the smallest such matroid is for $n = 6$. 
\begin{lemma}\label{lem:no.irr.5}
Let $n \leq 5$ and $\E$ be an irreducible matroid without loop on $[n]$. Then $\E$ is the uniform matroid on $[n]$ of rank 1. 
\end{lemma}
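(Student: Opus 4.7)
The plan is to reduce the statement to a classification problem for transversal matroids on few elements. By Theorem~\ref{thm:ce.irreducible}, it suffices to show that every loopless matroid $\E$ on $[n]$ with $n \le 5$ and rank $r \ge 2$ is reducible under matroid union. I would argue that every such $\E$ is a transversal matroid, i.e., it admits a representation $\E = U_{1, A_1} \vee \cdots \vee U_{1, A_r}$ for some subsets $A_1, \ldots, A_r \subseteq [n]$. Once such a presentation is available, $\E$ decomposes non-trivially as the matroid union of $U_{1, A_1}$ (rank $1$) with $U_{1, A_2} \vee \cdots \vee U_{1, A_r}$ (rank at most $r - 1$), exhibiting reducibility.

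The verification of transversality would proceed by a finite case analysis on $n \le 5$. After the easy reductions --- a matroid with a coloop $e$ splits as $\E = (\E \setminus e) \vee U_{1,\{e\}}$, and a direct sum $\E = \E^1 \oplus \E^2$ equals the matroid union $\E^1 \vee \E^2$ on disjoint ground sets --- one is left with connected, coloopless, loopless matroids of rank $\ge 2$ on at most $5$ elements. These fall into a short list: uniform matroids $U_{r,n}$ (for which $A_1 = \cdots = A_r = [n]$); rank-$2$ loopless matroids determined by a partition into parallel classes (with at most two classes of size $\ge 2$ when $n \le 5$, so place the large classes in $A_1 \setminus A_2$ or $A_2 \setminus A_1$ and the singletons in $A_1 \cap A_2$); and a handful of rank-$3$ matroids on $5$ elements, namely $U_{3,5}$, the matroid with a single $3$-circuit and two free points (which equals $U_{1,[5]} \vee U_{1,[5]} \vee U_{1,\{f_1,f_2\}}$ for $f_1,f_2$ the free points), and the bowtie $M(K_4 \setminus e)$ with two triangles sharing a vertex labelled $3$, which admits the presentation $A_1 = \{1,2\}$, $A_2 = \{4,5\}$, $A_3 = [5]$, verifiable directly via systems of distinct representatives. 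Parallel extensions are transversal whenever the underlying matroid is (extend any $A_i$ containing the original element to include the new parallel copy).

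The main obstacle is the rank-$3$ case for $n = 5$: enumerating the connected, coloopless, simple matroids (for instance via the correspondence with $5$-point subsets of the Fano plane) and producing an explicit transversal presentation in each case. An appealing shortcut is to invoke the folklore fact that the smallest non-transversal loopless matroid is $M(K_4)$ on $6$ elements, which would make the transversality claim immediate for $n \le 5$ and reduce the whole argument to one line.
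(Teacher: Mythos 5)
Your approach differs substantially from the paper's: you reduce reducibility of small matroids to \emph{transversality}, while the paper invokes the necessary condition (due to Recski) that an irreducible matroid must be connected with connected one-element deletions, uses a matroid database and Sage to enumerate the matroids on at most $5$ elements satisfying it, and then hand-decomposes the four survivors ($U_{2,4}$, $U_{2,5}$, $U_{3,5}$, and a rank-$2$ matroid $\E_2$ on five elements). Your route, if completed, would be more structural, but two steps in it do not hold up.

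First, the parallel-extension argument is wrong as stated. If $e$ lies in several of the presenting sets $A_i$, simply adding the new copy $e'$ to one of them fails: for $U_{3,4}$ with its presentation $A_1 = A_2 = A_3 = [4]$, putting $e' = 5$ into $A_1$ yields a matching $e \mapsto A_2$, $e' \mapsto A_1$, so $\{e,e'\}$ comes out independent rather than parallel. Transversal matroids \emph{are} closed under parallel extension, but the correct argument passes to a presentation in which $e$ sits in a unique $A_i$ (e.g.\ $A_1 = [5]$, $A_2 = A_3 = \{1,2,3\}$ presents the parallel extension of $U_{3,4}$ by $5\parallel 4$), and your case analysis leans on this step precisely for the non-simple, connected, coloopless rank-$3$ matroids on five elements that are missing from your explicit list. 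Second, the invoked ``folklore fact'' that $M(K_4)$ is the smallest non-transversal loopless matroid is asserted without a reference or a proof; if you intend to lean on it, you must either cite it or establish it, and establishing it is essentially the whole content of Lemma~\ref{lem:no.irr.5} by another route. Finally, the appeal to Theorem~\ref{thm:ce.irreducible} at the outset is unnecessary: Lemma~\ref{lem:no.irr.5} concerns matroid-union irreducibility only, so you can argue reducibility directly. As written, the proposal is a plausible plan whose key transversality claim is left unverified, not a completed proof.
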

\begin{proof}
Let $U_{r,S}$ denote the uniform matroid of rank $r$ on the ground set $S$. It is clear that $U_{1,[n]}$ is irreducible. Now suppose that $\E$ is another irreducible matroid. Then $\E$ must be connected, and its one-element deletions must also be connected \cite{recski1989some}. Exhaustive enumeration using the database of matroid \cite{Matsumoto2012,matroid-db} and the software Sage \cite{sagemath} shows that the only matroids $\E$ on $n \leq 5$ elements that satisfies these properties are $U_{2,[4]}, U_{2,[5]}, U_{[3],5}$, and the matroid $\E_2$ of rank 2 on 5 elements, obtained by taking $U_{2,[5]}$ and excluding $\{1,2\}$ from its set of bases. By direct computations, 
\begin{align*}
U_{2,[4]} = U_{1,[4]} \vee U_{1,[3]}, & \quad U_{2,[5]} = U_{1,[5]} \vee U_{1,[4]}, \\
U_{3,[5]} = U_{2,[5]} \vee U_{1,[4]}, & \quad \E_2 = U_{1,[5]} \vee U_{1,\{3,4,5\}}.
\end{align*}
Therefore, none of these matroids are irreducible. 
\end{proof}

\section{Summary and Open Questions}\label{sec:summary}
The matroid-based valuation (MBV) conjecture states that all gross substitutes valuations on at most $n$ items can be generated from repeatedly merging and taking the endowments of weighted rank of matroids defined on subsets of at most $m$ items. For finite $n$, equality can be achieved between these two classes if and only if $m = n$. In this paper we proved that this finite version of the MBV conjecture holds when the number of items $n$ is at most 3, and fails for all $n \geq 4$. Our proof is constructive: for small $n$ it gives an explicit decomposition based on the geometry of gross substitute valuations, for large $n$ it gives a large family of gross substitutes valuations that cannot be obtained through merging and endowments of weighted ranks. We also showed that some matroid-based valuations themselves are also the merge of other matroid-based valuations, and went on to characterize all the extreme elements in this class. These are precisely the weighted ranks of irreducible matroids. 

Our paper leaves three interesting open questions at the intersection of economics and matroid theory. First, our results indicate that merging and endowment alone are not rich enough operations to generate all gross substitutes valuations from a small subclass. Hatfield and Milgrom \cite{hatfield2005matching} showed that a large number of gross substitutes valuations that arise in economic applications are endowed assignment valuations (EAV), a much smaller class of functions obtained by merging and endowments of unit demand valuations. It would be interesting to systematically generalize this class even further, not with the goal of generating all gross substitutes valuations, but to generate a larger and useful subclass with tractable representations. For example, the partition valuations contain unit demand valuations with constant weights as a special case. Could they be generalized to a weighted version so that they subsume the EAV class? What would be their economics interpretations?

The second open direction is: what are the class of all irreducible gross substitutes valuations on $[n]$? Our results show that characterizing this class is fundamentally tied to the long-standing question of Welsh on characterizing irreducible matroids. We remark that endowment generalizes matroid contraction. However, it is not immediately obvious that the endowment of a weighted rank of some matroid $\E$ is the weighted rank of the contraction of $\E$. Is this true in general? If not, what are the gross substitutes valuations which are irreducible under merging but can be obtained as the contraction of the weighted rank of an irreducible matroid upstairs? These questions are fundamental to tackling the general MBV conjecture, and we hope that they will fuel new developments at the intersection of theoretical economics and matroid theory.    

\bibliography{nsf-refs.bib}
\bibliographystyle{alpha}

\appendix

\section{Proofs of results in Section \ref{subsec:geometry}}
Following \cite{murota2003discrete}, for $q \in \R^n$, $u[-q]: \mathbb{Z}^n \to \R \cup \{\pm \infty\}$ is the function $u[-q](a) := u(a) - \langle q,a\rangle$. 
For $u$ concave, $\sigma \subset \mathbb{Z}^n$ is a cell of $\Delta_{u}$ supported by some vector $q \in \R^n$ means
$\sigma = \arg\max u[q] = \arg\min (-u)[-q]$.    
The indicator function of a lattice polytope $Q$ is $\delta_Q: \mathbb{Z}^n \to \R \cup \{+\infty\}$, defined as
$$ \delta_Q(x) = 0 \mbox{ if } x \in Q \cap \mathbb{Z}^n, \quad \delta_Q(x) = +\infty \mbox{ else }. $$
The infimum convolution of two functions $u,v: \mathbb{Z}^n \to \R \cup \{+\infty\}$ over $\mathbb{Z}$ is the function $u \infconv v: \mathbb{Z}^n \to \R \cup \{+\infty\}$ given by
$$ u \infconv v(a) = \inf\{u(b) + v(b'): b+b' = a, b,b' \in \mathbb{Z}^n\}. $$

\subsection{Proof of Lemma \ref{lem:gs.mixed}}\label{proof:lem:gs.mixed} 
Since $u,v$ are gross substitute valuations, $(-u),(-v): \mathbb{Z}^n \to \R \cup \{+\infty\}$ are $M^\natural$-convex functions \cite[\S 6.8]{murota2003discrete}. By a definition chase, we have
\begin{equation}\label{eqn:u.ast.v.infconv}
-(u \ast v) = (-u) \infconv (-v) + \delta_{[0,1]^n}.
\end{equation}
Now, let $F$ be a face of $\Delta_{u \ast v}$. By definition, $F = \conv(\sigma)$ where $\sigma \subset \{0,1\}^n$ is a cell of $\Delta_{u \ast v}$ supported by some vector $q \in \R^n$. That is,
\begin{equation}\label{eqn:sigma.star}
\sigma = \arg\min (-(u \star v))[-q].
\end{equation}
By \eqref{eqn:u.ast.v.infconv}, 
$$ (-(u \ast v))[-q] = (-u) \infconv (-v)[-q] + \delta_{[0,1]^n}. $$
Fix a point $a^\ast \in \sigma$. Then
\begin{align*}
(-(u \ast v))[-q](a^\ast) &= (-u) \infconv (-v)[-q](a^\ast) + \delta_{[0,1]^n}(a^\ast) \quad \mbox{ since } \sigma \subset \{0,1\}^n \\
&\leq (-(u \ast v))[-q](a) \quad \mbox{ for all } a \in \mathbb{Z}^n \mbox{ by \eqref{eqn:sigma.star}} \\
&= (-u) \infconv (-v)[-q](a) + \delta_{[0,1]^n}(a) \quad \mbox{ by \eqref{eqn:u.ast.v.infconv}.}
\end{align*}
Apply the $M$-convex intersection theorem \cite[Theorem 8.17]{murota2003discrete} for $f_1 = (-u) \infconv (-v)[-q]$ and $f_2 = \delta_{[0,1]^n}$, we have that there exists some $p^\ast \in \R^n$ such that
$$ \sigma = \arg\min (-(u \ast v))[-q] = \arg\min f_1[-p^\ast] \cap \arg\min f_2[+p^\ast]. $$ 
Since $f_1[-p^\ast] = (-u) \infconv (-v)[-q-p^\ast]$, by \cite[Proposition 8.41]{murota2003discrete}, we have
$$ \arg\min f_1[-p^\ast] = \arg\min (-u)[-q-p^\ast] + \arg\min (-v)[-q-p^\ast] = \sigma^1 + \sigma^2, $$
where $\sigma^1$ is the cell $\arg\min (-u)[-q-p^\ast]$ of $\Delta_{u}$, while $\sigma^2$ is the cell $\arg\min (-v)[-q-p^\ast]$ of $\Delta_v$. Since $f_2$ is the indicator function of the cube, we can replace $\arg\min f_2[+p^\ast] \subseteq \{0,1\}^n$ by $\{0,1\}^n$. Thus we have
$$ F = \conv(\sigma) = \conv((\sigma^1 + \sigma^2) \cap \{0,1\}^n). $$ 
Since $u,v,u \star v \in \GS_n$, by Theorem \ref{thm:murota}, $\conv(\sigma^1), \conv(\sigma^2)$ and $\conv(\sigma)$ are $M^\natural$ lattice polytopes. In particular, taking convex hull, intersections with the lattice and taking Minkowski addition commute. Therefore,
\begin{align*}
F &= \conv((\sigma^1 + \sigma^2) \cap \{0,1\}^n) \\
&= \conv(\sigma^1 + \sigma^2) \cap [0,1]^n \quad \mbox{since }\conv(\sigma^1+\sigma^2) \mbox{ is $M^\natural$} \\
&= (\conv(\sigma^1) + \conv(\sigma^2)) \cap [0,1]^n \mbox{since }\conv(\sigma^1),\conv(\sigma^2) \mbox{ are $M^\natural$} \\
&= (F^1 + F^2) \cap [0,1]^n,
\end{align*}
where $F^1$ is a face of $\Delta_u$ and $F^2$ is a face of $\Delta_v$. This completes the proof. \qed

\begin{lemma}\label{lem:is.rank}
Suppose $P \subset [0,1]^n$ is an $M^\natural$ lattice polytope. Then $\rho_P$ defined in \eqref{eqn:rho.P} is the rank function of a matroid. 
\end{lemma}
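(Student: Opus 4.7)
The plan is to verify that $\rho_P$ satisfies the defining axioms of a matroid rank function: non-negative integer-valued with $\rho_P(\emptyset)=0$, monotone, unit-increment, and submodular. Since $P\subset [0,1]^n$ is a lattice polytope, the LP computing $\rho_P(T)$ attains its maximum at a vertex, and every vertex of $P$ lies in $\{0,1\}^n$. Let $\mathcal{B}\subseteq \{0,1\}^n$ denote the vertex set, identified with a family of subsets of $[n]$. Then
\[
\rho_P(T)=\max\{|B\cap T|:B\in\mathcal{B}\}\in\mathbb{Z}\cap[0,|T|],
\]
from which $\rho_P(\emptyset)=0$, monotonicity ($|B\cap T|\le |B\cap T'|$ whenever $T\subseteq T'$), and the unit-increment property $\rho_P(T\cup\{i\})-\rho_P(T)\in\{0,1\}$ all follow immediately.

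The substantive step is submodularity, $\rho_P(S)+\rho_P(T)\geq \rho_P(S\cup T)+\rho_P(S\cap T)$. The hypothesis that $P$'s edges are parallel to $\{e_i,e_i-e_j\}$ is equivalent to $\mathcal{B}$ being an $M^\natural$-convex subset of $\{0,1\}^n$ in the sense of \cite[Chapter 4]{murota2003discrete}. I would deduce submodularity by recognizing $\rho_P$ as the Legendre--Fenchel conjugate of $\delta_{\mathcal{B}}$ evaluated at 0/1 points. Concretely, since $\delta_{\mathcal{B}}$ is $M^\natural$-convex, its conjugate $p\mapsto \max\{\langle p,x\rangle:x\in\mathcal{B}\}$ is $L^\natural$-convex by the $M^\natural/L^\natural$ duality of \cite[Chapter 8]{murota2003discrete}. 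Evaluating at $p=e_T$ recovers $\rho_P(T)$. The discrete midpoint inequality characterizing $L^\natural$-convexity, applied to the pair $e_S,e_T$, together with the identities $\lceil (e_S+e_T)/2\rceil=e_{S\cup T}$ and $\lfloor (e_S+e_T)/2\rfloor=e_{S\cap T}$, then gives exactly $\rho_P(S)+\rho_P(T)\geq \rho_P(S\cup T)+\rho_P(S\cap T)$. Once submodularity is in hand, the standard characterization of matroid rank functions \cite{oxley2006matroid} furnishes the desired matroid on $[n]$.

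The main obstacle is pinning down the correct form of the Murota correspondence so that the geometric hypothesis on $P$'s edges translates cleanly to the $M^\natural$-convexity of $\mathcal{B}$ and then to submodularity of $\rho_P$; the calculation itself is short but relies on a precise citation. As a combinatorial alternative, one may explicitly construct the matroid with independent sets $\mathcal{I}:=\{I\subseteq [n]:I\subseteq B\text{ for some }B\in\mathcal{B}\}$ and verify the augmentation axiom directly: given $I,J\in\mathcal{I}$ with $|I|<|J|$, choose $B_I,B_J\in\mathcal{B}$ with $I\subseteq B_I$, $J\subseteq B_J$, and $|B_I\cap J|$ maximal, then apply the $M^\natural$ exchange axiom to some $k\in J\setminus B_I$ to produce a witness $j\in J\setminus I$ with $I\cup\{j\}\in\mathcal{I}$. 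This route requires careful case analysis to rule out the failure mode in which the exchange partner lies in $I$; the maximality of $|B_I\cap J|$ is what forces the benign case and is the only genuinely delicate point.
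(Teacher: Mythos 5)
Your route (a) is correct and follows essentially the same path as the paper: check the rank axioms of \cite[\S 1.3]{oxley2006matroid}, where the only nontrivial point is submodularity. The paper simply asserts that $\rho_P$ is submodular because $P$ is an $M^\natural$-convex set; you make that assertion precise via the $M^\natural$/$L^\natural$ conjugacy and the discrete midpoint inequality, which is a valid and more self-contained way of justifying the same step (your combinatorial alternative (b) is a genuinely different route, but as you note it is only sketched and hinges on an exchange-axiom case analysis you have not carried out).
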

\begin{proof}
Let $I \subseteq [n]$. Since $P$ is a lattice polytope, $\rho_P(I) \in \mathbb{N}$. Since $P \subset [0,1]^n$, $\rho_P(I) \leq |I|$ and $\rho_P$ is monotone non-decreasing, that is, $J \subseteq I \Rightarrow \rho_P(J) \leq \rho_P(I)$. Finally, $P$ is an $M^\natural$-convex set, $\rho_P$ is submodular. So $\rho_P$ satisfies the rank axioms of a matroid \cite[\S 1.3]{oxley2006matroid}. 
\end{proof}

\subsection{Proof of Proposition \ref{prop:mn.irr}}
Suppose for contradiction that $P$ is not $M$-irreducible, that is,
\begin{equation}\label{eqn:assume.P}
 P = (P^1 + P^2) \cap [0,1]^n 
\end{equation}
for some $M^\natural$ polytopes $P^1,P^2 \subset [0,1]^n$. 
By Lemma \ref{lem:is.rank}, $\rho_{P^1}, \rho_{P^2}$ are the rank functions of some matroids. For a polytope $Q$, let $\delta_Q: \R^n \to \R \cup \{+\infty\}$ be the indicator function of $Q$, that is
$$ \delta_Q(x) = 0 \mbox{ if } x \in Q, \quad \delta_Q(x) = +\infty \mbox{ if } x \notin Q. $$
By \cite[p80]{murota2003discrete}, \eqref{eqn:assume.P} implies
$$ \delta_P = (\delta_{P^1} \square \delta_{P^2}) + \delta_{[0,1]^n} $$
Now we take the Legendre-Fenchel transform of both sides. By the integer convolution formula \cite[Theorem 8.36]{murota2003discrete}
$$ \delta_P^\vee = (\delta^\vee_{P^1} + \delta^\vee_{P^2}) \square \delta^\vee_{[0,1]^n} $$
Thus, for $I \subseteq [n]$,
\begin{align*}
\rho_P(I) = \delta_P^\vee(e_I) &= \inf\{(\delta^\vee_{P^1} + \delta^\vee_{P^2})(x) + \sum_{i \in I}y_i: x+y = e_I, x,y \in \mathbb{Z}^n\} \\
&= \min\{(\delta^\vee_{P^1} + \delta^\vee_{P^2})(J) + |I-J|: J \subseteq I\} \\
&= \min_{J \subseteq I}(\rho_{P^1}(J) + \rho_{P^2}(J) + |I-J|). 
\end{align*}
By \cite[Corollary 42.1a]{schrijverA}, this implies $\E = \E^1 \vee \E^2$, a contradiction on the irreducibility of $\E$. This concludes the proof. \qed

\section{Gross substitutes valuations for $n = 3$}
Figures \ref{fig:type0} to \ref{fig:type.last} accompany the proof of Theorem \ref{thm:main} for $n = 3$. For each figure, the LHS gives a parametrization of $v$ that is necessary to obtain this regular subdivision, the caption shows the conditions on the weights, while the RHS writes this valuation as a maximum bipartite matching. This establishes both Theorem \ref{thm:main} for the case $n = 3$ and Corollary~\ref{cor:oxs}. 

\begin{figure}[h!]
\begin{tikzpicture}[->,>=stealth',shorten >=1pt,auto,node distance=1.25cm,semithick]
\node [xshift=-4cm, yshift=-1.5cm] (delta)
    {\includegraphics[width=0.45\textwidth]{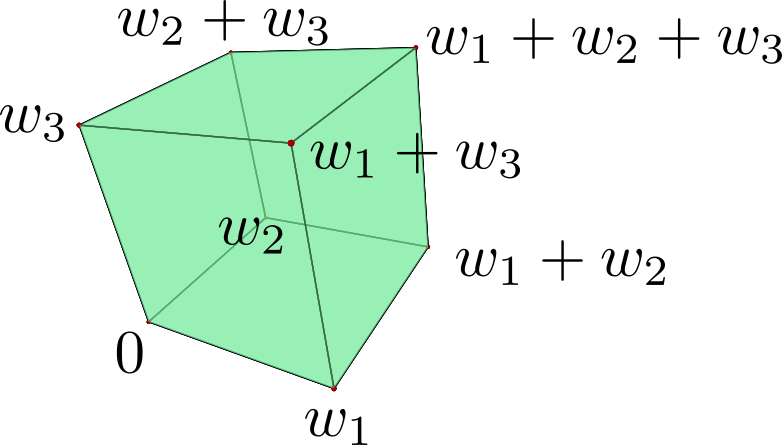} };
  \node (1) {$1$};
  \node (2) [below of=1] {$2$};
  \node (3) [below of=2] {$3$};  
  \node (A) [right of =1,xshift=2cm] {$A$};
  \node (B) [below of = A] {$B$};  
  \node (C) [below of = B] {$C$};    
  \path (1) edge node [above] {$w_1$} (A);
  \path (2) edge node [above,sloped] {$w_2$} (B);  
  \path (3) edge node [above] {$w_3$} (C);    
\end{tikzpicture} 
\caption{$0 \leq w_3 \leq w_2 \leq w_1$.}\label{fig:type0}
\end{figure}

\begin{figure}[h!]
\begin{tikzpicture}[->,>=stealth',shorten >=1pt,auto,node distance=1.25cm,semithick]
\node [xshift=-4cm, yshift=-1.5cm] (delta)
    {\includegraphics[width=0.45\textwidth]{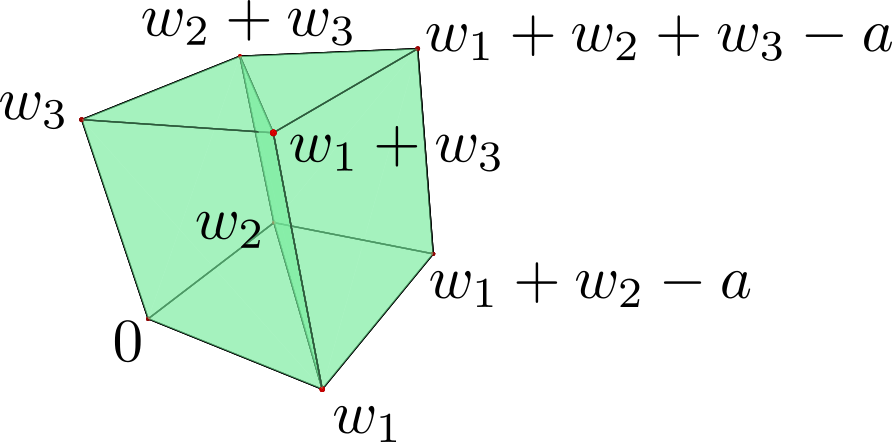} };
  \node (1) {$1$};
  \node (2) [below of=1] {$2$};
  \node (3) [below of=2] {$3$};  
  \node (A) [right of =1,xshift=2cm] {$A$};
  \node (B) [below of = A] {$B$};  
  \node (C) [below of = B] {$C$};    
  \path (1) edge node [above] {$w_1$} (A);
  \path (2) edge node [above,sloped] {$w_2$} (A);  
  \path (2) edge node [below] {$w_2-a$} (B);  
  \path (3) edge node [above] {$w_3$} (C);    
\end{tikzpicture}
\caption{$0 < w_3 \leq w_2 \leq w_1$, $a < w_2$}
\end{figure}

\begin{figure}[h!]
\begin{tikzpicture}[->,>=stealth',shorten >=1pt,auto,node distance=1.25cm,semithick]
\node [xshift=-4cm, yshift=-1.5cm] (delta)
    {\includegraphics[width=0.4\textwidth]{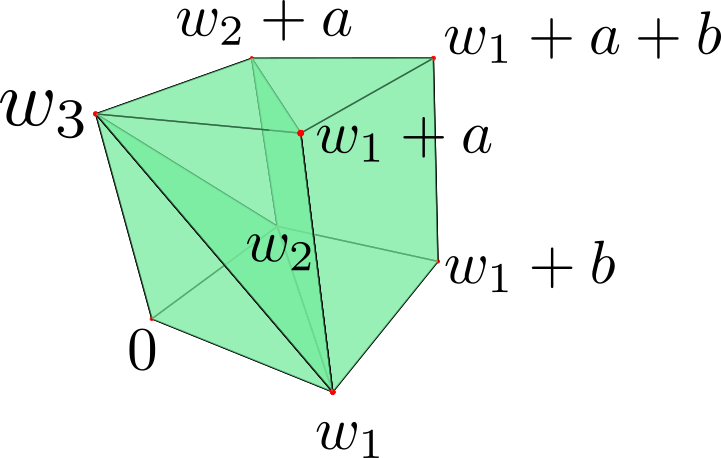}};
  \node (1) {$1$};
  \node (2) [below of=1] {$2$};
  \node (3) [below of=2] {$3$};  
  \node (A) [right of =1,xshift=2cm] {$A$};
  \node (B) [below of = A] {$B$};  
  \node (C) [below of = B] {$C$};    
  \path (1) edge node [above] {$w_1$} (A);
  \path (2) edge node [above,sloped] {$w_2$} (A); 
  \path (3) edge node [above,xshift=0.3cm,sloped] {$w_3$} (A);    
  \path (2) edge node [below,xshift=0.35cm] {$b$} (B);  
  \path (3) edge node [above] {$a$} (C);    
\end{tikzpicture}
\caption{$0 \leq a < w_3, 0 \leq b < w_2,$ $w_3 - a \neq w_2 - b$, $w_3 \leq w_2 \leq w_1$.}
\end{figure}

\begin{figure}[h!]
\begin{tikzpicture}[->,>=stealth',shorten >=1pt,auto,node distance=1.25cm,semithick]
\node [xshift=-4cm, yshift=-1.5cm] (delta)
    {\includegraphics[width=0.45\textwidth]{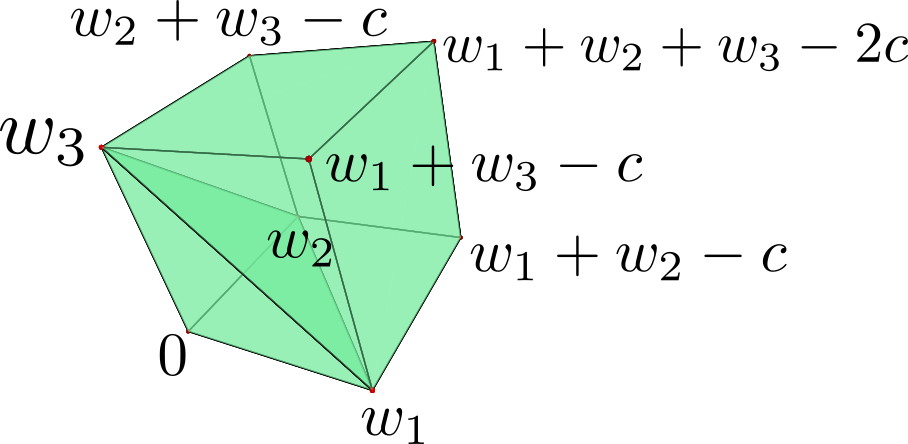}};
  \node (1) {$1$};
  \node (2) [below of=1] {$2$};
  \node (3) [below of=2] {$3$};  
  \node (A) [right of =1,xshift=2cm] {$A$};
  \node (B) [below of = A] {$B$};  
  \node (C) [below of = B] {$C$};    
  \path (1) edge node [above] {$w_1$} (A);
  \path (2) edge node [above,sloped] {$w_2$} (A); 
  \path (3) edge node [above,xshift=0.3cm,sloped] {$w_3$} (A);    
  \path (2) edge node [below,xshift=0.5cm] {$w_2-c$} (B);  
  \path (3) edge node [above] {$w_3-c$} (C);    
\end{tikzpicture}
\caption{$0 < c \leq w_3 \leq w_2 \leq w_1$.} 
\end{figure}

\begin{figure}[h!]
\begin{tikzpicture}[->,>=stealth',shorten >=1pt,auto,node distance=1.25cm,semithick]
\node [xshift=-4cm, yshift=-1.5cm] (delta)
    {\includegraphics[width=0.45\textwidth]{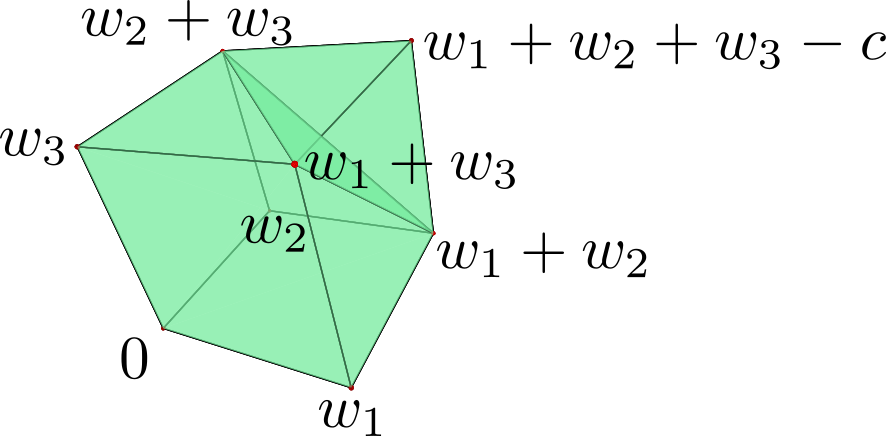}};
  \node (1) {$1$};
  \node (2) [below of=1] {$2$};
  \node (3) [below of=2] {$3$};  
  \node (A) [right of =1,xshift=2cm] {$A$};
  \node (B) [below of = A] {$B$};  
  \node (C) [below of = B] {$C$};    
  \path (1) edge node [above] {$w_1$} (A);
  \path (2) edge node [above,sloped] {$w_2$} (A); 
  \path (3) edge node [above,xshift=0.3cm,sloped]{$w_3$} (A);    
  \path (2) edge node [below,xshift=0.35cm,yshift=0.1cm] {$w_2$} (B);  
  \path (3) edge node [below] {$w_3$} (B);    
  \path (3) edge node [below] {$w_3-c$} (C);    
\end{tikzpicture}
\caption{$0 < c \leq w_3 \leq w_2 \leq w_1$}
\end{figure}

\begin{figure}[h!]
\begin{tikzpicture}[->,>=stealth',shorten >=1pt,auto,node distance=1.25cm,semithick]
\node [xshift=-4cm, yshift=-1.5cm] (delta)
    {\includegraphics[width=0.5\textwidth]{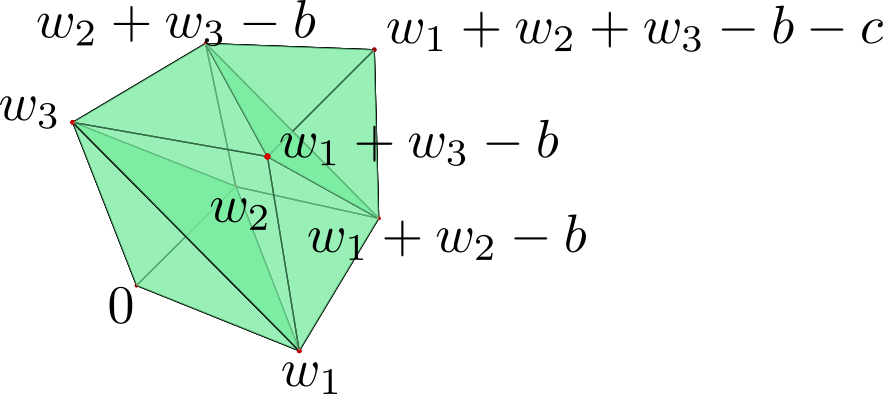}
};
  \node (1) {$1$};
  \node (2) [below of=1] {$2$};
  \node (3) [below of=2] {$3$};  
  \node (A) [right of =1,xshift=2cm] {$A$};
  \node (B) [below of = A] {$B$};  
  \node (C) [below of = B] {$C$};    
  \path (1) edge node [above] {$w_1$} (A);
  \path (2) edge node [above,sloped] {$w_2$} (A); 
  \path (3) edge node [above,xshift=0.3cm,sloped] {$w_3$} (A);    
  \path (2) edge node [below] {$w_2-b$} (B);  
  \path (3) edge node [below,sloped] {$w_3-b$} (B);    
  \path (3) edge node [below] {$w_3-c$} (C);    
\end{tikzpicture}
\caption{$0 < b < c < w_3 \leq w_2 \leq w_1$}
\end{figure}

\begin{figure}[h!]
\begin{tikzpicture}[->,>=stealth',shorten >=1pt,auto,node distance=1.25cm,semithick]
\node [xshift=-4cm, yshift=-1.5cm] (delta)
    {\includegraphics[width=0.5\textwidth]{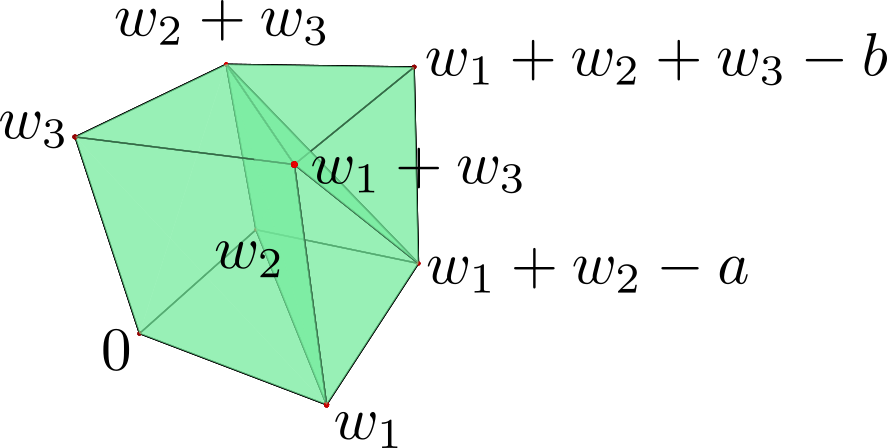}
};
  \node (1) {$1$};
  \node (2) [below of=1] {$2$};
  \node (3) [below of=2] {$3$};  
  \node (A) [right of =1,xshift=2cm] {$A$};
  \node (B) [below of = A] {$B$};  
  \node (C) [below of = B] {$C$};    
  \path (1) edge node [above] {$w_1$} (A);
  \path (2) edge node [above,sloped] {$w_2$} (A);  
  \path (3) edge node [above,xshift=0.3cm,sloped] {$w_3$} (A);    
  \path (2) edge node [below] {$w_2-b$} (B);  
  \path (2) edge node [below,sloped] {$w_2-a$} (C);    
  \path (3) edge node [below] {$w_3$} (C);    
\end{tikzpicture}
\caption{$0 < a < b$, $w_3 \leq w_2 \leq w_1$, $b \leq w_2$, $b-a<w_3$}
\end{figure}

\begin{figure}[h!]
\begin{tikzpicture}[->,>=stealth',shorten >=1pt,auto,node distance=1.25cm,semithick]
\node [xshift=-4cm, yshift=-1.5cm] (delta)
    {\includegraphics[width=0.5\textwidth]{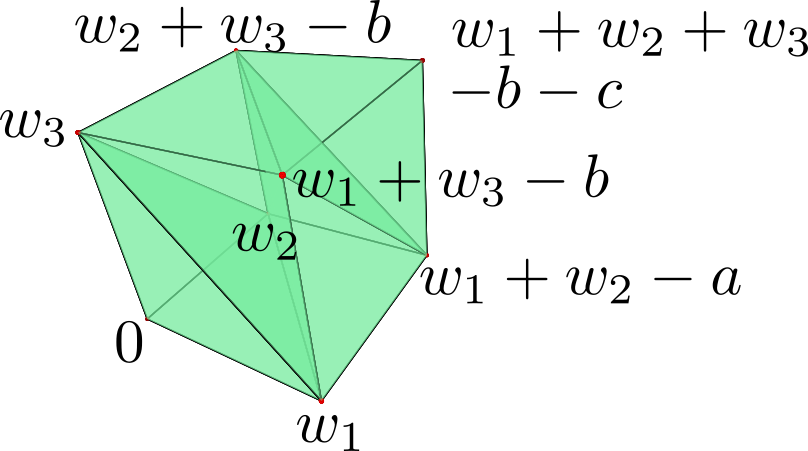}
};
  \node (1) {$1$};
  \node (2) [below of=1] {$2$};
  \node (3) [below of=2] {$3$};  
  \node (A) [right of =1,xshift=2cm] {$A$};
  \node (B) [below of = A] {$B$};  
  \node (C) [below of = B] {$C$};    
  \path (1) edge node [above] {$w_1$} (A);
  \path (2) edge node [above,sloped] {$w_2$} (A);  
  \path (3) edge node [above,xshift=0.3cm,sloped] {$w_3$} (A);    
  \path (2) edge node [below] {$w_2-a$} (B);  
  \path (3) edge node [below,sloped] {$w_3-b$} (B);    
  \path (3) edge node [below] {$w_3-b-c$} (C);    
\end{tikzpicture}
\caption{$0<a<w_2$, $0<b<b+c<w_3$, $w_3 \leq w_2 \leq w_1$}\label{fig:type.last}
\end{figure}

\end{document}